\newtheorem{theorem}{Theorem}[section]
\newtheorem{lemma}[theorem]{Lemma}
\newtheorem{definition}[theorem]{Definition}
\newtheorem{proposition}[theorem]{Proposition}
\newtheorem{corollary}[theorem]{Corollary}
\newtheorem{example}[theorem]{Example}
\newtheorem{remark}[theorem]{Remark}
\def\mmset{{\mathcal B}}
\def\R{\mathbb{R}}
\def\bzero{\mathbf{0}}
\def\bunity{\mathbf{1}}
\def\conv{\operatorname{conv}_{\oplus}}
\def\spann{\operatorname{span}_{\oplus}}
\def\cB{\mmset}
\def\Lin{\operatorname{Lin}}
\def\supp{\operatorname{supp}}
\def\rank{\operatorname{rank}}
\begin{document}

\title{On the dimension of max-min convex sets}

\author[rvt1]{Viorel Nitica\fnref{fn1}}
\ead{vnitica@wcupa.edu}

\author[rvt2]{Serge{\u\i} Sergeev\corref{cor}\fnref{fn2}}
\ead{sergiej@gmail.com}

\address[rvt1]{University of West Chester, Department of Mathematics,
PA 19383, USA, and Institute of Mathematics, P.O. Box 1-764,
Bucharest, Romania}

\address[rvt2]{University of Birmingham, School of Mathematics, Edgbaston, B15 2TT, UK}

\cortext[cor]{Corresponding author.}

\fntext[fn1]{Supported by Simons Foundation Grant 208729.}

\fntext[fn2]{Supported by EPSRC grant EP/J00829X/1
and RFBR grant 12-01-00886.}

\begin{abstract} We introduce a notion of dimension of max-min convex sets, following the
approach of tropical convexity.
 We introduce a max-min analogue of the tropical rank of a matrix and show that it is equal to the dimension
 of the associated polytope. We describe the relation between this rank and the notion of strong regularity
 in max-min algebra, which is traditionally defined in terms of
unique solvability of linear systems and the trapezoidal property.
\end{abstract}

\begin{keyword}
Max-min algebra, dimension, tropical convexity, tropical rank, strongly regular
matrix \vskip0.1cm {\it{AMS Classification:}} 15A80, 52A01, 16Y60.
\end{keyword}

\maketitle

\section{Introduction\label{sec1}}

The max-min semiring is defined as the unit interval $\mmset=[0,1]$ with the operations
$a\oplus b:=\max(a,b)$, as addition, and $a\otimes b:=\min(a,b)$, as multiplication. The operations are idempotent,
$\max(a,a)=a=\min(a,a)$, and related to the
order:
\begin{equation}\label{first-eq-662}
\max(a,b)=b\Leftrightarrow a\leq b\Leftrightarrow \min(a,b)=a.
\end{equation}
One can naturally extend them to matrices and vectors
leading to the max-min (fuzzy) linear algebra
of~\cite{BCS-87,Cec-95, Gav-01,Gav:04, GP-07}. Note that in~\cite{GP-07}
the authors developed a more general version of max-min algebra over
arbitrary linearly ordered set, but we will not follow this
generalization here.

We denote by $\mmset(d,m)$ the set
of $d\times m$ matrices with entries in $\mmset$ and by $\mmset^d$
the set of $d$-dimensional vectors with entries in $\mmset$. Both
$\mmset(d,m)$ and $\mmset^d$ have a natural structure of semimodule
over the semiring $\mmset$.

A subset $V\subseteq\mmset^d$ is a {\em subsemimodule} if $u,v\in V$ imply
$u\oplus v\in V$ and $\lambda\otimes v\in V$ for all $\lambda\in\mmset$. Subsemimodules can be thought of
 as max-min analogue of subspaces or convex cones (especially in the
 context of the present paper).
In the max-min literature, subsemimodules arise as images of max-min
matrices or as eigenspaces. A subsemimodule $V\subseteq\mmset^d$
is said to be generated by a subset $X\subseteq\mmset^d$ and it is denoted by
$V=\spann(X)$, if it can be represented as a set of all {\em max-min linear combinations}
\begin{equation}
\label{mmlinear}
\bigoplus_{i=1}^m \lambda_i\otimes x^i\colon m\geq 1, \lambda_1,\dots,\lambda_m\in \mmset,
\end{equation}
of all $m$-tuples of elements $x^1,\ldots,x^m\in X$.

 The {\em max-min segment} between $x,y\in\mmset^d$ is defined as
\begin{equation}
\label{segm0}
\begin{aligned}
\ [x,y]_{\oplus} &= \{\alpha\otimes x\oplus \beta\otimes y\mid \,\alpha, \beta\in \mmset, \alpha \oplus \beta =1\}.\  
\end{aligned}
\end{equation}

A set $C\subseteq\mmset^d$ is called {\em max-min convex}, if it contains, with any two points
$x,y,$ the segment $[x,y]_{\oplus}$ between them. For a general subset $X\subseteq\mmset^d$,
define its {\em convex hull} $\conv(X)$ as the smallest max-min convex set containing $X$, i.e.,
the smallest set containing $X$ and stable under taking segments~\eqref{segm0}.
As in the ordinary convexity, $\conv(X)$ is the set of
all {\em max-min convex combinations}
\begin{equation}
\label{convX}
\bigoplus_{i=1}^m \lambda_i\otimes x^i\colon m\geq 1,\ \lambda_1,\dots,\lambda_m\in \mmset, \bigoplus_{i=1}^m \lambda_i=1,
\end{equation}
of all $m$-tuples of elements $x^1,\ldots,x^m\in X$.
The max-min convex hull
of a finite set of points is also called a {\em max-min convex polytope}.

The development of max-min convexity has been mostly inspired by new geometric techniques in max-plus (tropical)
linear algebra, like those developed in~\cite{AGG,DS-04,develin-etc, DLP}.
The development of tropical (max-plus) convexity
was started by K.~Zimmermann~\cite{Zim-77}, and it gained new
impetus after the works of Cohen, Gaubert, Quadrat and Singer~\cite{CGQS-05},
and  Develin, Sturmfels~\cite{DS-04}. This development has led
to many theoretical and algorithmic results, and in particular,
to new methods describing the solution set of max-plus
linear systems of equations~\cite{AGG,DLP}.

K.~Zimmermann~\cite{Zim-81} also suggested to develop
the convex geometry over wider classes of semirings
with idempotent addition, including the max-min semiring.
To the authors' knowledge, the case of
max-min semiring did not receive much interest in the past.
Though this case is mostly of
theoretical interest to us, it is also motivated by the
theory of fuzzy sets~\cite{HZim}. Some recent developments
in max-min convexity
include the description of max-min segments~\cite{NS-08I,Ser-03},
max-min semispaces~\cite{NS-08II}
and hyperplanes~\cite{Nit-10}, separation and non-separation results~\cite{NS-10,NS-11}. See~\cite{NS-13} for a
survey of max-min convexity that also includes some new results, in particular,
colorful extensions of the max-min Carath\'{e}odory theorem, as well as some
applications of the topological Radon theorem.

The present paper aims to develop a new geometric approach to the well-known notions of strong regularity and matrix rank in max-min algebra.  To this end, it seems to be the first paper that connects max-min linear algebra with max-min
convexity. Our main result is Theorem~\ref{t:mainres} stating that
the ``geometric'' dimension of a max-min polytope is equal to a max-min analogue of the tropical rank of the matrix whose columns are the ``vertices'' of that polytope.

 Let us make some preliminary observations. Note first that any subsemimodule is a max-min convex set. Moreover, since any max-min convex combination is just a max-min linear combination with one coefficient equal to $1$, we obtain that the max-min subsemimodules are precisely the max-min convex sets containing $0$. Thus for any $X\subseteq\mmset^d$, we have
\[
\spann(X)=\conv(X,\{0\}).
\]
We conclude that a finitely generated max-min semimodule can also be described as a max-min polytope with one ``vertex'' in the origin.

Conversely, if $C\subseteq\mmset^d$ is a max-min convex set, then
\[
V_C:=\{(\lambda\otimes x,\lambda)\mid x\in C,\,\lambda\in\mmset\}
\]
is a subsemimodule of $\mmset^{d+1}$. This construction is called {\em homogenization}.

The paper is organized as follows. The structure of
max-min segments is revisited in
Section~\ref{s:segments}. A notion of dimension in max-min
convexity is introduced and studied in Section~\ref{s:dim}.
Our approach is inspired by a geometric idea behind the notion the tropical rank~\cite{develin-etc}, that is,
a tropically convex polytope can be represented as a union of conventionally convex sets, and
its dimension can be defined as the greatest dimension of these convex sets.
In Section~\ref{s:rank} we introduce a notion of strong regularity and a notion of rank for a matrix $A$ over the max-min semiring. We
show that the rank of $A$, as we introduce it, is equal to the
dimension of the max-min convex hull of the columns of $A$.
In Section~\ref{s:sr} we show that our notion of strong regularity is
equivalent to the one traditionally studied in max-min algebra.
Thus it is closely related to the unique solvability of max-min
linear systems of the type $A\otimes x=b$ and, further, to the trapezoidal property of a matrix as studied, for example,
in~\cite{BCS-87,Cec-95,Gav-01, GP-07}.

\section{Max-min segments}
\label{s:segments}

In this section we describe general segments in $\mmset^{d},$
following~\cite{NS-08I,Ser-03}, where complete proofs can be found.
Note that the description of the segments in \cite{NS-08I, Ser-03}
is done for the equivalent case where $\mmset=[-\infty, +\infty]$.

Let $x=(x_{1},...,x_{d}),$ $y=(y_{1},...,y_{d})\in\mmset^{d},$
and assume that we are in the \emph{case of comparable endpoints},
say $x\leq y$ in the natural
order of $\mmset^{d}.$
Sorting the set of all coordinates $\{x_{i},y_{i},i=1,...,d\}$
we obtain a non-decreasing sequence, denoted by $t_1,t_2,\ldots, t_{2d}$.
This sequence divides the set $\mmset$ into
$2d+1$ subintervals
$\sigma_0=[0,t_{1}],\,\sigma_1=[t_1,t_2],...,\sigma_{2d}=[t_{2d},1]$,
with consecutive subintervals having one common endpoint.

Every point $z\in [x,y]_{\oplus}$ is represented as
$z=\alpha\otimes x\oplus\beta\otimes y$, where $\alpha=1$ or $\beta=1$.
However, case $\beta=1$ yields only $z=y$, so we can assume $\alpha=1$.
Thus $z$ can be regarded as a function of one parameter $\beta$, that is,
$z(\beta)=(z_{1}(\beta),...,z_{d}(\beta))$ with $\beta\in\mmset$.
Observe that for $\beta\in\sigma_0$ we have $z(\beta)=x$ and for
$\beta\in\sigma_{2d}$ we have $z(\beta)=y$. Vectors $z(\beta)$ with
$\beta$ in any other subinterval form a conventional {\em elementary segment}.
Let us proceed with a formal account of all this.

\begin{theorem}
\label{tcomp}
Let $x,y\in\mmset^d$ and $x\leq y$.
\begin{itemize}
\item[(i)] We have
\begin{equation}
\label{e:chain}
[x,y]_{\oplus}=\bigcup_{l=1}^{2d-1} \{z(\beta)\mid\beta\in\sigma_l\},
\end{equation}
where $z(\beta)=x\oplus(\beta\otimes y)$ and $\sigma_{\ell}=[t_l,t_{l+1}]$ for
$\ell=1,\ldots,2d-1$, and $t_1,\ldots,t_{2d}$ is the nondecreasing sequence
whose elements are the coordinates $x_i,y_i$ for $i=1,\ldots,d$.
\item[(ii)] For each $\beta\in\mmset$ and $i$, let
$M(\beta)=\{i\colon x_i\leq\beta\leq y_i\}$, $H(\beta)=\{i\mid\beta\geq y_i\}$
and $L(\beta)=\{i\colon\beta\leq x_i\}$. Then
\begin{equation}
\label{zibeta}
z_i(\beta)=
\begin{cases}
\beta, &\text{if $i\in M(\beta)$},\\
x_i, &\text{if $i\in L(\beta)$},\\
y_i, &\text{if $i\in H(\beta)$},
\end{cases}
\end{equation}
and $M(\beta),L(\beta), H(\beta)$ do not change in the interior
of each interval $\sigma_{\ell}$.
\item[(iii)]  The sets $\{z(\beta)\mid\beta\in\sigma_{\ell}\}$ in~\eqref{e:chain}
are conventional closed segments in $\mmset^d$ (possibly reduced to a point),
described by~\eqref{zibeta} where $\beta\in\sigma_{\ell}$.
\end{itemize}
\end{theorem}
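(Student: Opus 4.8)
The plan is to analyze the single-parameter family $z(\beta) = x \oplus (\beta \otimes y)$ coordinate by coordinate, since max-min operations act componentwise. For a fixed coordinate $i$, we have $z_i(\beta) = \max(x_i, \min(\beta, y_i))$. Because $x \leq y$, i.e.\ $x_i \leq y_i$ for all $i$, it is elementary to check that this equals $x_i$ when $\beta \leq x_i$ (then $\min(\beta,y_i)\leq\beta\leq x_i$), equals $\beta$ when $x_i \leq \beta \leq y_i$ (then $\min(\beta,y_i)=\beta\geq x_i$), and equals $y_i$ when $\beta \geq y_i$ (then $\min(\beta,y_i)=y_i\geq x_i$). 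This is precisely the case split~\eqref{zibeta}, and it also establishes that the three sets $M(\beta), L(\beta), H(\beta)$ are determined by comparing $\beta$ with the $x_i$ and $y_i$. Hence, as $\beta$ ranges over the interior of a subinterval $\sigma_\ell = [t_\ell, t_{\ell+1}]$, no comparison of $\beta$ with any $t_j$ changes its sign, so $M, L, H$ are constant there; this proves part (ii).

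For part (iii), I would fix $\ell$ and observe that on $\sigma_\ell$ each coordinate of $z(\beta)$ is, by~\eqref{zibeta}, either a constant ($x_i$ or $y_i$) or the affine function $\beta \mapsto \beta$. Thus the map $\beta \mapsto z(\beta)$ restricted to $\sigma_\ell$ is affine in $\beta$, so its image is the conventional segment joining $z(t_\ell)$ and $z(t_{\ell+1})$ (degenerating to a point precisely when all coordinates are constant on $\sigma_\ell$, which happens for $\ell$ such that $t_\ell = t_{\ell+1}$, and also — by the remark preceding the theorem — for the extreme intervals $\sigma_0$ and $\sigma_{2d}$, which is why they are omitted). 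One must also note that consecutive pieces agree at the shared endpoint $t_{\ell+1}$, so the union is a connected chain.

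For part (i), the argument in the paragraph preceding the theorem reduces the problem to the claim that $[x,y]_\oplus = \{z(\beta) \mid \beta \in \mmset\}$: indeed any point of the segment is $\alpha \otimes x \oplus \beta \otimes y$ with $\alpha \oplus \beta = 1$; the case $\beta = 1$ gives $z(1) = x \oplus y = y$ (using $x \leq y$), which is $z(\beta)$ for $\beta \in \sigma_{2d}$, and otherwise $\alpha = 1$ gives exactly $z(\beta)$. Then one checks that $\{z(\beta) \mid \beta \in \sigma_0\} = \{x\}$ (since $\beta \leq t_1 \leq x_i$ for all $i$ forces $z_i(\beta) = x_i$) and $\{z(\beta) \mid \beta \in \sigma_{2d}\} = \{y\}$ (since $\beta \geq t_{2d} \geq y_i$ for all $i$ forces $z_i(\beta) = y_i$), so these two trivial pieces may be dropped from the union in favor of the points $x = z(t_1)$ and $y = z(t_{2d})$, which already appear as endpoints of $\sigma_1$ and $\sigma_{2d-1}$ respectively. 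Covering $\mmset = \bigcup_{\ell=0}^{2d} \sigma_\ell$ then yields~\eqref{e:chain}.

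The bulk of this is routine componentwise case-checking; the only place demanding a little care is the bookkeeping of the boundary intervals $\sigma_0$ and $\sigma_{2d}$ and verifying that discarding them does not lose the endpoints $x$ and $y$, together with the observation that the endpoints of adjacent elementary segments match up so that~\eqref{e:chain} genuinely exhibits the segment as a connected polygonal chain. Since complete proofs are available in~\cite{NS-08I,Ser-03}, I would keep the exposition brief and emphasize the coordinatewise reduction.
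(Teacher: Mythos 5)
Your proof is correct and follows essentially the same componentwise reduction that the paper sketches in the paragraph preceding the theorem (the paper itself defers the complete proof to~\cite{NS-08I,Ser-03}). One small imprecision that does not affect the argument: a middle piece $\{z(\beta)\mid\beta\in\sigma_{\ell}\}$ with $t_{\ell}<t_{\ell+1}$ can also degenerate to a point whenever $M(\beta)=\emptyset$ on the interior of $\sigma_{\ell}$, not only for the extreme intervals or when $t_{\ell}=t_{\ell+1}$.
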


For \emph{incomparable endpoints} $x\not\leq y,\,y\not\leq x,$
the description can be reduced to that of segments with comparable endpoints,
by means of the following observation.

\begin{theorem}
\label{tincomp}Let $x,y\in
\mmset^d$. Then $[x,y]_{\oplus}$ is the
concatenation of two segments with comparable endpoints, namely 
$\lbrack x,y]_{\oplus}=[x,x\oplus y]_{\oplus}\cup [y,x\oplus y]_{\oplus}.$
\end{theorem}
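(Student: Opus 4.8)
The plan is to establish the two set inclusions separately, after the preliminary remark that both segments on the right-hand side \emph{do} have comparable endpoints: since $x\leq x\oplus y$ and $y\leq x\oplus y$, Theorem~\ref{tcomp} applies to each of $[x,x\oplus y]_{\oplus}$ and $[y,x\oplus y]_{\oplus}$, so the statement genuinely reduces the incomparable case to the comparable one. It is also worth noting in passing that the two pieces meet at the common point $x\oplus y=1\otimes x\oplus 1\otimes y$, which is what the word ``concatenation'' is meant to convey.

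For the inclusion $[x,y]_{\oplus}\subseteq[x,x\oplus y]_{\oplus}\cup[y,x\oplus y]_{\oplus}$, I would take an arbitrary $z=\alpha\otimes x\oplus\beta\otimes y$ with $\alpha\oplus\beta=1$. The key elementary observation is that $\alpha\oplus\beta=\max(\alpha,\beta)=1$ forces $\alpha=1$ or $\beta=1$. In the case $\alpha=1$, I use the absorption identity $x\oplus\lambda\otimes(x\oplus y)=x\oplus\lambda\otimes y$, valid because $\lambda\otimes x\leq x$ coordinatewise and $\oplus$ is idempotent; taking $\lambda=\beta$ rewrites $z=x\oplus\beta\otimes(x\oplus y)$, which exhibits $z$ as a point of $[x,x\oplus y]_{\oplus}$ (with unit coefficient on $x$). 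The case $\beta=1$ is symmetric and places $z$ in $[y,x\oplus y]_{\oplus}$.

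For the reverse inclusion, I would first note that $[x,y]_{\oplus}$ is itself max-min convex — indeed $[x,y]_{\oplus}=\conv\{x,y\}$, since collecting the $x$- and $y$-terms of any convex combination of copies of $x$ and $y$ again yields an expression $\alpha\otimes x\oplus\beta\otimes y$ with $\alpha\oplus\beta=1$. Because $x\oplus y\in[x,y]_{\oplus}$, and $[x,y]_{\oplus}$ is convex and contains $x$, it contains $\conv\{x,x\oplus y\}=[x,x\oplus y]_{\oplus}$, and likewise $[y,x\oplus y]_{\oplus}$; hence the union on the right is contained in $[x,y]_{\oplus}$. (Alternatively, one can run the same absorption identity in reverse to express each point of $[x,x\oplus y]_{\oplus}$ directly in the form $\alpha\otimes x\oplus\beta\otimes y$.)

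I do not anticipate a serious obstacle: the whole argument rests on idempotency and the ``one coefficient must equal $1$'' observation. The only points requiring a little care are (a) confirming that every point of $[x,y]_{\oplus}$ admits a representation with $\alpha=1$ or $\beta=1$, which is immediate from the definition~\eqref{segm0}, and (b) verifying the absorption identity coordinatewise, i.e.\ that $\max(x_i,\min(\lambda,\max(x_i,y_i)))=\max(x_i,\min(\lambda,y_i))$ for every $i$, which is a short case analysis on the relative order of $x_i$, $y_i$, and $\lambda$.
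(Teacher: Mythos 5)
Your proof is correct. Note that the paper itself does not prove Theorem~\ref{tincomp} --- it is quoted from the references \cite{NS-08I,Ser-03} --- so there is no in-paper argument to compare against; your self-contained derivation, resting on the dichotomy $\alpha=1$ or $\beta=1$, the absorption identity $x\oplus\lambda\otimes(x\oplus y)=x\oplus\lambda\otimes y$ (which follows from distributivity of $\otimes$ over $\oplus$ together with $\lambda\otimes x\leq x$), and the identification $[x,y]_{\oplus}=\conv(\{x,y\})$ for the reverse inclusion, is a perfectly sound way to establish the claimed decomposition.
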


All types of segments for $d=2$ are shown in the right side of Figure 1.

The left side of Figure 1 shows, for the corresponding segments with comparable endpoints, a diagram, where for
$x=(x_1,x_2,x_3)$ and $y=(y_1, y_2,y_3)$, the intervals $[x_1,y_1], [x_2,y_2], [x_3,y_3],$ are placed
over one another, and their arrangement induces a tiling of the horizontal axis, which
shows the possible values of the parameter $\beta$.
The partitions of the intervals $[x_i,y_i], 1\le i\le 3,$ induced by this tiling are associated with the intervals
$\sigma_l$, and show the sets of {\em active indices} $i$ with $z_i(\beta)=\beta$.

\begin{figure}[h]
\begin{tikzpicture}[scale=.68]
\node at (4,3) {Segments in $\mmset^2$, comparable endpoints};
\draw [line width = 1] (0,0)--(2,0)--(2,2)--(0,2)--(0,0)--(2,2);
\draw [line width = 2] (.1, 1)--(.5,1)--(.5,1.7);
\draw [line width = 1] (3,0)--(5,0)--(5,2)--(3,2)--(3,0)--(5,2);
\draw [line width = 2] (3.1,.6)--(3.6,.6)--(4.5,1.5)--(4.5,1.9);
\draw [line width = 1] (6,0)--(8,0)--(8,2)--(6,2)--(6,0)--(8,2);
\draw [line width = 2] (6.1,.6)--(6.6,.6)--(7.5,1.5)--(7.9,1.5);
\draw [line width = 1] (0,-3)--(2,-3)--(2,-1)--(0,-1)--(0,-3)--(2,-1);
\draw [line width = 2] (.9,-2.9)--(.9,-2.5)--(1.7,-2.5);
\draw [line width = 1] (3,-3)--(5,-3)--(5,-1)--(3,-1)--(3,-3)--(5,-1);
\draw [line width = 2] (3.5,-2.9)--(3.5,-2.5)--(4.5,-1.5)--(4.5,-1.1);
\draw [line width = 1] (6,-3)--(8,-3)--(8,-1)--(6,-1)--(6,-3)--(8,-1);
\draw [line width = 2] (6.5,-2.9)--(6.5,-2.5)--(7.5,-1.5)--(7.8,-1.5);
\node at (4,-4) {Segment in $\mmset^2$, incomparable endpoints};
\draw [line width = 1] (3,-7)--(5,-7)--(5,-5)--(3,-5)--(3,-7)--(5,-5);
\draw [line width = 2] (3.4,-5.5)--(4.6,-5.5)--(4.6,-6.7);

\draw [line width = 1] (-10,-6)--(-2,-6);
\draw [line width = 2] (-9,-2)--(-5,-2);
\draw [line width = 2] (-8,-1)--(-3,-1);
\draw [line width = 2] (-7,0)--(-4,0);
\draw [line width = 1] [dotted] (-9,-6)--(-9,1);
\draw [line width = 1] [dotted] (-8,-6)--(-8,1);
\draw [line width = 1] [dotted] (-7,-6)--(-7,1);
\draw [line width = 1] [dotted] (-5,-6)--(-5,1);
\draw [line width = 1] [dotted] (-4,-6)--(-4,1);
\draw [line width = 1] [dotted] (-3,-6)--(-3,1);
\node at (-9,-6.5) {$t_1$};
\node at (-8,-6.5) {$t_2$};
\node at (-7,-6.5) {$t_3$};
\node at (-5,-6.5) {$t_4$};
\node at (-4,-6.5) {$t_5$};
\node at (-3,-6.5) {$t_6$};
\node at (-1.8,-6.5) {$\beta$};

\node at (-9.5,-5.5) {\tiny{$\emptyset$}};
\node at (-8.5,-5.5) {\tiny{$\{1\}$}};
\node at (-7.5,-5.5) {\tiny{$\{1,2\}$}};
\node at (-6,-5.5) {\tiny{$\{1,2,3\}$}};
\node at (-4.5,-5.5) {\tiny{$\{2,3\}$}};
\node at (-3.5,-5.5) {\tiny{$\{2\}$}};
\node at (-2.5,-5.5) {\tiny{$\emptyset$}};
\node at (-1.5,-5.5) {\tiny{$M(\beta)$}};

\node at (-9.5,-4.5) {$\sigma_0$};
\node at (-8.5,-4.5) {$\sigma_1$};
\node at (-7.5,-4.5) {$\sigma_2$};
\node at (-6,-4.5) {$\sigma_3$};
\node at (-4.5,-4.5) {$\sigma_4$};
\node at (-3.5,-4.5) {$\sigma_5$};
\node at (-2.5,-4.5) {$\sigma_6$};

\node at (-9.5,-2) {$x_1$};
\node at (-4.5,-2) {$y_1$};
\node at (-8.5,-1) {$x_2$};
\node at (-2.5,-1) {$y_2$};
\node at (-7.5,0) {$x_3$};
\node at (-3.5,0) {$y_3$};

\node at (-5.5,3) {Diagram showing intervals $\sigma_{\ell}$ and sets};
\node at (-5.5,2.3) {of coordinates moving together $M(\beta)$};
\end{tikzpicture}
\caption{Max-min segments\label{f:segments}.}
\end{figure}
\medskip

\begin{remark}
\label{rsamerole}
{\rm It follows from the description above that each elementary segment is determined by a partition of the set of coordinates in two subsets. For points in the elementary segment, the coordinates in the first subset are constant and the coordinates in the second subset are all equal to a parameter running over a $1$-dimensional interval.
Therefore, similarly
to the max-plus case (see \cite{NS-1}, Remark 4.3) in $\mmset^d$ there are elementary segments in only $2^d-1$ directions. Elementary
segments are the "building blocks" for the max-min segments in $\mmset^d,$ in the sense that every segment $[x,y]\subseteq \mmset^d$ is the concatenation of a
finite number of elementary subsegments (at most) $2d-1$, respectively
$2d-2$, in the case of comparable, respectively incomparable, endpoints.
In the case of incomparable endpoints, the set of coordinates is partitioned
in two subsets of comparable coordinates, say of
cardinality $d_1,d_2,$ with $d_1+d_2=d$. The first subset
determines at most $2d_1-1$ elementary segments, and the second
set determines at most $2d_2-1$ elementary segments, for a total
of at most $2d-2$ elementary segments.}
\end{remark}

We close this section with an observation which we will need further. In this observation, as in the subsequent parts of the paper,
we will use the conventional arithmetic operations $(+,\cdot)$. For a real vector $y=(y_1,\dots,y_d)\in\R^d$, we define the support of $y$ 
 (with respect to the standard basis), as $\supp(y):=\{i\mid y_i\neq 0\}$.

\begin{lemma}
\label{l:mmsegm}
Let $y\in\mmset^d$ and let $u\in \R^d$ be a nonnegative real vector with support $\supp(u)=M$ such that $y+u\in\mmset^d$. Then
the following are equivalent:
\begin{itemize}
\item[{\rm (i)}] $[y,y+u]_{\oplus}$ contains only vectors $y+u'$ with $u'$ proportional to $u$;
\item[{\rm (ii)}] for all $i,j\in M$ we have $y_i=y_j$ and $u_i=u_j$.
\end{itemize}
\end{lemma}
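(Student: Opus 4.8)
The plan is to apply Theorem~\ref{tcomp} to the comparable pair $y\le y+u$. Write $z(\beta):=y\oplus(\beta\otimes(y+u))$ for $\beta\in\mmset$, so that by Theorem~\ref{tcomp}(i) every point of $[y,y+u]_{\oplus}$ has this form, and set $u'(\beta):=z(\beta)-y$. Since $y_i=y_i+u_i$ for $i\notin M$, formula~\eqref{zibeta} gives $z_i(\beta)=y_i$ for every such $i$ and every $\beta$; hence $u'(\beta)$ is always a nonnegative vector supported in $M$, and the whole question reduces to understanding, for $i\in M$, the one-parameter ``clock'' $\beta\mapsto u'_i(\beta)=z_i(\beta)-y_i$, which by~\eqref{zibeta} equals $0$ for $\beta\le y_i$, equals $\beta-y_i$ for $y_i\le\beta\le y_i+u_i$, and equals $u_i$ for $\beta\ge y_i+u_i$. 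Throughout, ``proportional to $u$'' is read as belonging to $\R u$.

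First I would prove (ii)$\Rightarrow$(i). The cases $M=\emptyset$ and $|M|=1$ are immediate (the segment is a point, resp.\ the conventional segment in the direction $u$), so assume $y_i=a$ and $u_i=b>0$ for all $i\in M$. Then the three threshold conditions $\beta\le y_i$, $y_i\le\beta\le y_i+u_i$, $\beta\ge y_i+u_i$ become $\beta\le a$, $a\le\beta\le a+b$, $\beta\ge a+b$ \emph{simultaneously} for all $i\in M$. Hence $u'(\beta)=\bzero$ for $\beta\le a$, $u'(\beta)=u$ for $\beta\ge a+b$, and $u'(\beta)$ has all its $M$-coordinates equal to $\beta-a$ for $a\le\beta\le a+b$, so that $u'(\beta)=((\beta-a)/b)\,u$. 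In every case $u'(\beta)\in\R u$, which is (i).

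Next, (i)$\Rightarrow$(ii) by contraposition: assuming (ii) fails, I would exhibit a $\beta\in\mmset$ with $u'(\beta)\notin\R u$. If $y_i<y_j$ for some $i,j\in M$, choose $\beta$ in the nonempty interval $(y_i,\min(y_j,y_i+u_i))\subseteq\mmset$; then $i\in M(\beta)$ gives $u'_i(\beta)=\beta-y_i>0$ while $j\in L(\beta)$ gives $u'_j(\beta)=0$, and since $u_j>0$ no scalar multiple of $u$ has a positive $i$-th coordinate and a zero $j$-th coordinate. Otherwise all $y_i$ with $i\in M$ coincide, say $y_i=a$; since (ii) fails there are $i,j\in M$ with $u_i<u_j$, and I choose $\beta\in(a+u_i,a+u_j)$, a nonempty subinterval of $(0,1)$ because $u_i>0$ and $a+u_j=y_j+u_j\le1$. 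Then $i\in H(\beta)$ gives $u'_i(\beta)=u_i$ while $j\in M(\beta)$ gives $u'_j(\beta)=\beta-a<u_j$, so the scalar would have to be $1$ by coordinate $i$ and strictly less than $1$ by coordinate $j$ --- impossible. This covers both ways (ii) can fail.

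The only bookkeeping is to verify that each chosen $\beta$ genuinely lies in $\mmset=[0,1]$ and that the interval from which it is drawn is nonempty; both follow from $u\ge\bzero$, $\supp(u)=M$, and $y+u\in\mmset^d$. I do not anticipate a serious obstacle: once Theorem~\ref{tcomp} supplies the explicit piecewise description of the segment, the statement is a short case analysis on the relative order of the numbers $y_i$ and $y_i+u_i$, $i\in M$, on the line.
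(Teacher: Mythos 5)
Your proof is correct and follows essentially the same route as the paper: both directions rest on the explicit piecewise description of the segment from Theorem~\ref{tcomp}, and your contrapositive argument splits into the same two cases (unequal $y$-coordinates on $M$, then unequal $u$-coordinates). The only difference is presentational --- you pick an explicit parameter $\beta$ and evaluate $z(\beta)$ via~\eqref{zibeta}, whereas the paper points to the first and last subsegments of the chain.
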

\begin{proof}
(i)$\Rightarrow$(ii): By contradiction, let the condition of (ii) be violated. Suppose first that $y_i\neq y_j$ for some $i,j\in M$, and let
$M'\subseteq M$ be the proper subset of indices attaining $\min_{i\in M} y_i$. By Theorem~\ref{tcomp} (see also the left part of Figure~\ref{f:segments})
it follows that there is a nonnegative vector $u'$ such that $\supp(u')=M'$ and $y+u'$ belongs to the first subsegment of
$[y,y+u]_{\oplus}$. As $M'$ is a proper subset of $M$, it follows that
$u'$ is non-proportional to $u$.

Suppose now that $y_i=y_j$ for all $i,j\in M$ but $u_i\neq u_j$ for some $i,j\in M$. Let $M''\subseteq M$ be the
proper subset of indices attaining $\max_{i\in M} u_i$. By Theorem~\ref{tcomp} (see also the left part of Figure~\ref{f:segments})
it follows that there is a nonnegative vector $u''$ such that $\supp(u'')=M''$ and $y+u-u''$ belongs to the last subsegment of
$[y,y+u]_{\oplus}$. As $M''$ is a proper subset of $M$, it follows that
$u-u''$ is non-proportional to $u$.

(ii)$\Rightarrow$(i): By Theorem~\ref{tcomp}, in this case $[y,y+u]_{\oplus}$ is just the ordinary segment $\{y+u'\mid u'=\lambda u,\; 0\leq\lambda\leq 1\}$.
\end{proof}

\section{Dimension and max-min polytropes}
\label{s:dim}

The dimension of a max-min convex set can be introduced in the spirit of
the tropical rank, see for instance Develin, Santos,
Sturmfels~\cite[Section 4]{develin-etc}.
In this set-up we expect polytopes to be
representable as complexes of cells that are convex both in the usual and
in the new sense. We are interested in the interplay between these
convexities, similar to the case of tropical (max-plus) mathematics.

In what follows $\mmset^d$ has the usual Euclidean topology.
If $C\subseteq \mmset^d$, we denote
by $\overline{C}$ the closure of $C$ and by $\text{int}(C)$ the interior of $C$.

\begin{definition}
A max-min convex set $C\subseteq\mmset^d$, for $0\leq k\leq d$, is called a
$k$-dimensional open (resp. closed) max-min polytrope if it is also
a $k$-dimensional relatively open (resp. closed) conventionally
convex set.
\end{definition}

This concept is a max-min analogue of the so-called polytropes,
i.e., the sets which are (traditionally) convex and
tropically convex at the same time, see Joswig and Kulas~\cite{JK-09}.
Various types of
convex sets are shown in Figure 2.

\begin{figure}[h]
\begin{subfigure}{.45\textwidth}
\centering
\begin{tikzpicture}[scale=.5]
\draw [line width = 1] (0,0)--(10,0)--(10,10)--(0,10)--(0,0);
\draw [line width = 1][dotted] (0,0)--(10,10);
\draw [line width = 2] (3,8)--(6,8);
\draw [line width = 2] (9,3)--(9,6);
\draw [line width = 2] (6,6)--(7.8,7.8);
\draw [line width = 2] [fill=lightgray] (2,3)--(3,3)--(4,4)--(4,6)--(2,6)--(2,3);
\draw [fill=lightgray] (8,3) arc(0:-180:2);
\draw [line width = 2] (8,3) arc(0:-180:2);
\draw [line width = 2] (4,3)--(8,3);
\end{tikzpicture}
\caption{Closed max-min polytropes in
$\mmset^2$\label{f:polytropes}.}
\end{subfigure}
~~
\begin{subfigure}{.45\textwidth}
\centering
\begin{tikzpicture}[scale=.5]
\draw [line width = 1] (0,0)--(10,0)--(10,10)--(0,10)--(0,0);
\draw [line width = 1][dotted] (0,0)--(10,10);
\draw [line width = 2] [fill=lightgray] (1,5)--(4,5)--(1,8)--(1,5);
\draw [line width = 2] [fill=lightgray] (4,1)--(4,3.5)--(8,3.5)--(6,3.5)--(6,1)--(4,1);
\end{tikzpicture}
\caption{A max-min convex set that is not conventionally convex, and
hence not a max-min $2$-dimensional polytrope (below the diagonal),  and a
conventionally convex set that is not max-min convex in
$\mmset^2$\label{f:polytropes} (above the diagonal).}
\end{subfigure}
\caption{Max-min convex sets in
$\mmset^2$\label{f:polytropes}.}
\end{figure}

\begin{definition}
\label{def:dim} The dimension of a max-min convex set
$C\subseteq\mmset^d$, denoted by $\dim(C)$, is the greatest $k$ such
that $C$ contains a $k$-dimensional open polytrope.
\end{definition}

\begin{remark}
\label{r:useofdim}
{\rm Occasionally the notation $\dim$ will be also used for the usual dimension of
(conventionally) linear spaces and convex sets --- making sure that this will not
lead to any confusion.}
\end{remark}

Note that if the max-min convex set $C\subseteq \mmset^d$ has dimension $d$, then $C$  has nonempty interior.


In what follows we will make use of the usual linear algebra and
the usual convexity. For a convex set $C\subseteq\R^d$, let $C-y:=\{z-y\colon z\in C\}$,
and let $\Lin(C-y)$ be the least conventionally linear space containing $C-y$. From the convex analysis, recall that $C$ is relatively open if $C-y$ is open in $\Lin(C-y)$ for some, and hence for all $y\in C$. In this case, for any $u\in\Lin(C-y)$ there is $\epsilon>0$ such that $y+\epsilon u\in C$ and, conversely, if $y+u\in C$ then $u\in\Lin(C-y)$.

Observe that if $C$ is closed under componentwise maxima $\oplus$,
as in the case when it is a polytrope, then for each pair
$u,v\in\Lin(C-y)$ we have $y+\epsilon u, y+\epsilon v\in C$ for some
$\epsilon>0$ and $(y+\epsilon u)\oplus(y+\epsilon
v)=y+\epsilon(u\oplus v)\in C$, hence $u\oplus v\in\Lin(C-y)$. So
$\Lin(C-y)$ is also closed under taking componentwise maxima.
In particular, it follows that $\Lin(C-y)$ has a vector whose
support contains the support of any other vector in $\Lin(C-y)$,
that is, a vector whose support is the {\em largest} (by
inclusion).

The following auxiliary lemma, about the conventional linear algebra,
will be needed in the proof
of Theorem~\ref{t:quasibox}.

\begin{lemma}\label{l:aux53}  Let $L\subseteq \R^n$ be a linear subspace.
Assume that $L$ contains nonnegative vectors with largest support. Then:
\begin{equation}\label{e:neweq99221}
\Lin (L\cap \R^n_+)=L
\end{equation}
and, in particular, $\dim(\Lin (L\cap \R^n_+))=\dim(L)$.
\end{lemma}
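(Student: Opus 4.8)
The plan is to exhibit a nonnegative vector $w\in L\cap\R^n_+$ whose support is largest among all vectors in $L$, and then to show that every vector of $L$ can be written as a real linear combination of nonnegative vectors in $L$; this immediately gives $\Lin(L\cap\R^n_+)\supseteq L$, while the reverse inclusion is trivial. The dimension statement then follows at once from \eqref{e:neweq99221}.

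\textbf{Key steps.} First I would fix a nonnegative $w\in L$ with $\supp(w)=M$ largest (this exists by hypothesis), and note $M$ contains $\supp(v)$ for every $v\in L$; in particular every $v\in L$ is supported on $M$, so without loss of generality we may regard $L$ as sitting inside the coordinate subspace $\R^M$ and $w$ as strictly positive on $M$. Next, given an arbitrary $v\in L$, observe that for sufficiently small $\epsilon>0$ the vector $w+\epsilon v$ is still nonnegative (its coordinates on $M$ are close to the strictly positive coordinates of $w$, and off $M$ it vanishes), hence $w+\epsilon v\in L\cap\R^n_+$. Then $v=\epsilon^{-1}\bigl((w+\epsilon v)-w\bigr)$ is a real linear combination of the two nonnegative vectors $w+\epsilon v$ and $w$, both lying in $L\cap\R^n_+$, so $v\in\Lin(L\cap\R^n_+)$. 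Since $v\in L$ was arbitrary, $L\subseteq\Lin(L\cap\R^n_+)$, and the opposite inclusion holds because $L$ is a linear subspace containing $L\cap\R^n_+$. Finally, taking dimensions of both sides of \eqref{e:neweq99221} gives $\dim(\Lin(L\cap\R^n_+))=\dim(L)$.

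\textbf{Main obstacle.} There is essentially no obstacle here: the only point requiring a little care is the choice of $\epsilon$, namely that the perturbation $w+\epsilon v$ stays in the nonnegative orthant. This works precisely because $w$ is strictly positive exactly on $M=\supp(w)$ and every $v\in L$ vanishes outside $M$, so the coordinates that could go negative are bounded away from zero before perturbation; choosing $\epsilon<\min_{i\in M} w_i/\max_{i\in M}|v_i|$ (or any positive $\epsilon$ if $v=0$) suffices. This is exactly where the hypothesis that $L$ contains nonnegative vectors with largest support is used, and it is the crux of the argument.
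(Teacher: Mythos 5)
Your proof is correct and uses essentially the same idea as the paper: the paper takes a basis $\{f_1,\dots,f_k\}$ of $L$ and adds large multiples $m_i e$ of the nonnegative largest-support vector $e$ to push each $f_i$ into the nonnegative orthant, which is the same perturbation argument you run with $w+\epsilon v$ (just rescaled), exploiting in both cases that every $v\in L$ vanishes off $\supp(e)$ while $e$ is strictly positive there. No further comment needed.
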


\begin{proof} As $L\cap \R^n_+\subseteq L$, we always have  $\Lin (L\cap \R^n_+))=L$, so it suffices to
prove that $L$ can be generated by some vectors in $L\cap \R^n_+$, under the given condition.

Let $e\in L$ be a nonnegative vector with largest support and let
$\{f_1,\dots,f_k\}$ be a
basis for $L$. For every $i=1,\ldots,k$,
there exists $m_i>0$ such that $F:=\{f_1+m_1e,\dots,f_k+m_ke\}$ is a family of nonnegative vectors
in $L\cap \R^n_+$. The family $\tilde F:=F\cup \{e\}$ is a
family of nonnegative vectors in $L\cap \R^n_+$ that generates $L$ (since it generates all the base vectors),
so~\eqref{e:neweq99221} holds.
\end{proof}


The following result investigates
some of the interplay between the max-min and conventional convexities.
For a monograph in conventional
convexity see, e.g., Rockafellar~\cite{Rock}.

\begin{theorem}
\label{t:quasibox} Let $d\ge 1, 0\le k\le d$. Let $C\subseteq\mmset^d$ be a $k$-dimensional
open polytrope. Then for each point $y\in C$ there exist pairwise
disjoint index sets $J_1,\ldots, J_k\subseteq\{1,\ldots,d\}$ and
scalars $t_1,\ldots, t_k\in\mmset$ such that
\begin{itemize}
\item[(i)] $y_{\ell}=t_i$ for each $\ell\in J_i$ and $i\in\{1,\ldots,k\}$;
\item[(ii)] for some sufficiently small $\epsilon>0$, the set
\begin{equation}
\label{e:quasibox}
\begin{split}
B_y^{\epsilon}(J_1,\ldots,J_k):=&\times_{i=1}^k\{z^{J_i}\mid z_{\ell}^{J_i}=s_i,\, \forall\ell\in J_i,\;
t_i-\epsilon<s_i<t_i+\epsilon\}\times\\
&\times_{\ell\notin J}\{y_{\ell}\},
\end{split}
\end{equation}
where $J=J_1\cup\ldots\cup J_k$ and $z^{J_i}$ denotes a
(sub)vector with components indexed by $J_i$, is contained in $C$.
\end{itemize}
\end{theorem}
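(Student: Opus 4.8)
The plan is to analyse, for a fixed point $y\in C$, the conventional linear span $L:=\Lin(C-y)$. Since $C$ is a $k$-dimensional relatively open convex set, $\dim L=k$, the set $C-y$ is an open (in $L$) neighbourhood of $\bzero$, and $y+\delta v\in C$ for every $v\in L$ and all small $\delta>0$, while $z\in C$ forces $z-y\in L$. Being max-min convex, $C$ is closed under the componentwise maximum $\oplus$ (the segment $[x,x']_{\oplus}$ contains $x\oplus x'=1\otimes x\oplus 1\otimes x'$), so, as observed just before Lemma~\ref{l:aux53}, $L$ is closed under $\oplus$; since $L$ is a linear subspace it is then also closed under componentwise minima, and it contains a nonnegative vector whose support $M$ is the largest one occurring in $L$ (take $\tilde e\vee(-\tilde e)$ for a largest-support $\tilde e\in L$).

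The first and hardest step is a purely linear-algebraic structure result: \emph{a linear subspace $L\subseteq\R^d$ closed under componentwise maxima and minima and possessing a largest support $M$ can be written as $L=\Lin\{p^{(1)},\dots,p^{(k)}\}$, where $p^{(1)},\dots,p^{(k)}$ are nonnegative, have pairwise disjoint supports $J_1,\dots,J_k$, and $J_1\cup\dots\cup J_k=M$.} This refines Lemma~\ref{l:aux53} (which already gives that $L$ is generated by $L\cap\R^d_+$). I would prove it by induction on $|M|$: choose a nonzero $v^*\in L\cap\R^d_+$ whose support $J_1$ is minimal by inclusion; then every $w\in L$ restricts on $J_1$ to a scalar multiple of $v^*|_{J_1}$, for otherwise subtracting the appropriate multiple of $v^*$, then applying $(\cdot)\vee\bzero$ and then taking the componentwise minimum with $v^*$ would produce a nonzero element of $L\cap\R^d_+$ with support a proper nonempty subset of $J_1$, contradicting minimality. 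Hence the restriction map $L\to\R^{M\setminus J_1}$ has kernel $\R v^*$ and image a smaller max/min-closed subspace with largest support $M\setminus J_1$; applying the inductive hypothesis to the image, lifting its generators, and correcting each lift by a multiple of $v^*$ to kill its $J_1$-part supplies $p^{(2)},\dots,p^{(k)}$, while $p^{(1)}:=v^*$; a dimension count finishes it. I expect this step (and keeping careful track of supports in it) to be the main obstacle.

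Next I would use max-min convexity to replace each $p^{(i)}$ by the $0/1$ indicator $\bunity_{J_i}$ of its support. Fix $i$. As $\pm p^{(i)}\in L$, for small $\delta>0$ we have $y+\delta p^{(i)}\in C$ and $y+\delta p^{(i)}\ge y$, so by max-min convexity $[y,y+\delta p^{(i)}]_{\oplus}\subseteq C$. For $\ell\notin J_i$ the two endpoints agree in coordinate $\ell$, hence so does every point of the segment; therefore each $z$ in the segment has $z-y\in L$ supported on $J_i$, and by the structure result above such a vector is a scalar multiple of $p^{(i)}$. Thus condition (i) of Lemma~\ref{l:mmsegm} holds for $u=\delta p^{(i)}$, so its condition (ii) yields $y_a=y_b$ and $(p^{(i)})_a=(p^{(i)})_b$ for all $a,b\in J_i$: writing $t_i$ for the common value of $y$ on $J_i$ gives exactly assertion (i) of the theorem, and $p^{(i)}$ is a positive multiple of $\bunity_{J_i}$, so $\bunity_{J_i}\in L$. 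Running this over all $i$ yields $L=\Lin\{\bunity_{J_1},\dots,\bunity_{J_k}\}$.

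Finally, for assertion (ii): $B_y^{\epsilon}(J_1,\dots,J_k)-y$ is exactly the set of vectors $\sum_{i=1}^{k}(s_i-t_i)\bunity_{J_i}$ with $|s_i-t_i|<\epsilon$, a bounded subset of $\Lin\{\bunity_{J_1},\dots,\bunity_{J_k}\}=L$. Since $C-y$ is an open (in $L$) neighbourhood of $\bzero$, for $\epsilon>0$ small enough this box lies in $C-y$, i.e. $B_y^{\epsilon}(J_1,\dots,J_k)\subseteq C$; this also forces $0<t_i<1$, so no boundary effects interfere. The degenerate case $k=0$ (where $C=\{y\}$) is handled trivially by the same bookkeeping.
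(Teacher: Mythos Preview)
Your proposal is correct, and its overall architecture coincides with the paper's: fix $y\in C$, study $L=\Lin(C-y)$, exploit $\oplus$-closure of $L$ to produce a nonnegative basis with pairwise disjoint supports, then apply Lemma~\ref{l:mmsegm} to force each basis vector to be a multiple of an indicator $\bunity_{J_i}$ and $y$ to be constant on $J_i$, and finally fit the quasibox inside the open neighbourhood $C-y$ of $\bzero$ in $L$.

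The one genuine difference is how the disjoint-support basis is obtained. The paper first argues (as you do, but via a less direct contradiction) that the largest support in $L$ is realised by a nonnegative vector, invokes Lemma~\ref{l:aux53} to get $\dim(L\cap\R_+^d)=k$, and then appeals to Minkowski's theorem: the extremal rays of the closed cone $K=L\cap\R_+^d$ generate $L$, and a short $\oplus$-computation shows that two extremals with overlapping supports would contradict extremality. You instead give an elementary induction: pick a nonzero $v^*\in L\cap\R_+^d$ of inclusion-minimal support $J_1$, use closure under $\vee\,\bzero$ and $\wedge\,v^*$ to show every $w\in L$ restricts on $J_1$ to a multiple of $v^*|_{J_1}$, and recurse on the image of $L$ in $\R^{M\setminus J_1}$. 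Your route avoids Minkowski's theorem and is more self-contained (and your observation $|\tilde e|=\tilde e\vee(-\tilde e)\in L$ is a cleaner way to get a nonnegative largest-support vector than the paper's argument); the paper's route makes the connection to classical convex cone geometry explicit. After this step the two proofs are essentially identical.
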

\begin{proof} Assume $C$ is not a point.
Given $y\in C$, consider $\Lin(C-y)$. We will show that it has a
nonnegative orthogonal basis. First, observe
that the max-min segments connecting $y$ with other points of $C$
give rise, possibly after a change of sign, to some nontrivial
nonnegative vectors in $\Lin(C-y)$. This follows from the description of
max-min segments given in Theorem~\ref{tcomp}.

Let us first show that the largest support of nonnegative vectors
in $\Lin(C-y)$ is equal to the largest support among all vectors of
$\Lin(C-y)$.
By contradiction, assume that the largest support of a
nonnegative vector is a proper subset $M\subset \{1,\ldots,d\}$,
achieved by a vector $u\in\Lin(C-y)$, and that there is a vector
$v\in\Lin(C-y)$ with some negative coordinates and support $\supp
v\not\subseteq M$. Possibly after inversion, $v$ has some positive
coordinates, whose indices do not belong to $M$. As $C$ is max-min
convex, we have $u\oplus v\in\Lin(C-y)$, and then $u\oplus v$ is a
nonnegative vector whose support strictly includes $M$, a
contradiction.

Thus we can assume that $\Lin(C-y)$ contains nonnegative vectors
with the largest support, hence by Lemma~\ref{l:aux53} the linear span of
its nonnegative part, the convex cone $K:=\Lin(C-y)\cap\R_+^d$, has
the same dimension $k$ as $\Lin(C-y)$. As $K$ is closed, by the
usual Minkowski theorem it can be represented as the set of positive
linear combinations of its extremal rays (recall that $w\in K$ is
called extremal if $u+v=w$ and $u,v\in K$ imply that $u$ and $v$ are
proportional with $w$), which generate the whole $\Lin(C-y)$. We
will prove that the extremal rays of $K$ have pairwise disjoint
supports.

By contradiction, let $u$ and $v$ be extremal rays of $K$,
not proportional with each other, with
$L:=\supp u\cap\supp v\neq\emptyset$.
We can assume that $\supp(u)=\supp(v)=L$ or that
$\supp u\neq\supp v$ and $(\supp
v)\backslash L\neq\emptyset$.
Take $\lambda>0$ and $\mu>0$ such
that $\lambda v_i> 2u_i$ and $u_i>\mu v_i$ for all $i\in L$. Hence we have
$(\lambda v-u)_i>\mu v_i$ for all $i\in L$. The
vector $w=\mu v \oplus (\lambda v-u)$ is nonnegative, below
$\lambda v$, and not proportional to $v$: in the case when $\supp(u)=\supp(v)=L$
 it is equal to $\lambda v-u$, and in the other case we have
$w_i=\mu v_i$ for $i\in (\supp v)\backslash L$ and $w_i>\mu v_i$ for
$i\in L$. We see that $w$ and $\lambda v-w$ are in $K$ not being
proportional to $v$, which contradicts that $v$ is extremal.

Thus we have proved that $\Lin(C-y)$ has an orthonormal basis
consisting of nonnegative vectors whose supports are pairwise
disjoint. The vectors of this basis (no more than $d$) also
generate the cone $K=\Lin(C-y)\cap\R_+^d$ being the extremals of
$K$. Now we use that $C$ is max-min convex and investigate the
properties of $y$ and the vectors of that basis. For a vector $u$
from the basis, there exists $\epsilon>0$ such that $y+\epsilon u$
belongs to $C$. From Lemma~\ref{l:mmsegm} we see that unless all components of $u$ are
equal to each to other and the corresponding components of $y$ are
equal to each other, we can find a vector $u'\leq u$ such that
$y+u'\in[y,y+\epsilon u]_{\oplus}\subseteq C$, where $u'$ is
non-proportional to $u$. Then $u=(u-u')+u'$ is not an extremal of $K$, a
contradiction.

So we obtained that for each $u$ in the nonnegative orthogonal basis
of $\Lin(C-y)$, all nonzero components of $u$ are equal to each
other, and the corresponding coordinates of $y$ are equal to each
other. Since the supports of the base vectors are pairwise disjoint,
this implies that $C$ contains a set of the form~\eqref{e:quasibox}.
More precisely, if the base vectors are denoted by $g^1,\ldots, g^k$
then we take $J_i=\supp(g^i)$ for $i=1,\ldots,k$. Since we can find $\epsilon$
such that $y+\epsilon g^i\in C$ for all $i$, we obtain that
$B_y^{\epsilon}(J_1,\ldots,J_k)\subseteq C$.
\end{proof}

\begin{definition}
\label{def:elem-quasibox} A set of the form~\eqref{e:quasibox} will
be called a ($k$-dimensional, open) {\rm quasibox.}
\end{definition}

\begin{lemma} A $k$-dimensional quasibox is a $k$-dimensional polytrope.
\end{lemma}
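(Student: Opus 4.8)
The plan is to verify the two requirements in the definition of a $k$-dimensional open polytrope directly for a set $B = B_y^{\epsilon}(J_1,\ldots,J_k)$ of the form~\eqref{e:quasibox}: namely, that $B$ is a $k$-dimensional relatively open conventionally convex set, and that $B$ is max-min convex. The conventional part is essentially immediate. Writing $B$ as the Cartesian product $\bigl(\times_{i=1}^k I_i\bigr) \times \bigl(\times_{\ell\notin J}\{y_\ell\}\bigr)$, where each $I_i = \{z^{J_i} : z^{J_i}_\ell = s_i\ \forall \ell\in J_i,\ t_i - \epsilon < s_i < t_i + \epsilon\}$ is an open interval's worth of diagonal subvectors, I observe that $B$ lives in the affine subspace where the coordinates in each $J_i$ are constrained to be equal and the coordinates outside $J$ are fixed; inside that subspace $B$ is an open box (a product of $k$ open intervals). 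Hence $B$ is relatively open and conventionally convex, and $\dim B = k$ since the $J_i$ are pairwise disjoint and nonempty (so the $k$ ``diagonal directions'' $g^i := \sum_{\ell\in J_i} e_\ell$ are linearly independent). One should note for the record that $B\subseteq\mmset^d$ because $t_i\in\mmset$ and, for $\epsilon$ small enough, $(t_i-\epsilon,t_i+\epsilon)\cap\mmset$ is a genuine subinterval; this is the role of the smallness of $\epsilon$.

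The substantive point is max-min convexity. Take two points $z, z' \in B$, determined by scalar tuples $(s_1,\ldots,s_k)$ and $(s'_1,\ldots,s'_k)$ respectively, with $z_\ell = z'_\ell = y_\ell$ for $\ell\notin J$. I must show $[z,z']_\oplus \subseteq B$. Here I would invoke the explicit description of max-min segments from Theorem~\ref{tcomp} (reducing to the comparable-endpoint case via Theorem~\ref{tincomp} if necessary). The key structural observation is that for any point $w = z(\beta)$ on the segment, each coordinate $w_\ell$ equals $z_\ell$, $z'_\ell$, or the common parameter $\beta$, by~\eqref{zibeta}. Since within a block $J_i$ we have $z_\ell = s_i$ and $z'_\ell = s'_i$ for all $\ell\in J_i$ (constants not depending on $\ell$), formula~\eqref{zibeta} assigns to every $\ell\in J_i$ the same value — either $s_i$, or $s'_i$, or $\beta$ — because the sets $M(\beta), L(\beta), H(\beta)$ are determined by comparing $\beta$ with $z_\ell$ and $z'_\ell$, which are identical across $J_i$. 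Thus $w$ is constant on each $J_i$, with value some $s''_i$ lying between $s_i$ and $s'_i$ (hence in $(t_i-\epsilon, t_i+\epsilon)$, by convexity of that interval), and $w_\ell = y_\ell$ for $\ell\notin J$ since there $z$ and $z'$ agree. Therefore $w\in B$, which is what we need.

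The main obstacle — really the only place requiring care — is the incomparable-endpoints case: when $z\not\leq z'$ and $z'\not\leq z$, Theorem~\ref{tincomp} expresses $[z,z']_\oplus$ as $[z, z\oplus z']_\oplus \cup [z', z\oplus z']_\oplus$. I would first check that $z\oplus z'$ itself lies in $B$: on each block $J_i$ it equals the constant $\max(s_i, s'_i)\in(t_i-\epsilon,t_i+\epsilon)$, and off $J$ it equals $y_\ell$; so $z\oplus z'\in B$. Then each of the two concatenated pieces is a comparable-endpoint segment between two points of $B$, and the argument of the previous paragraph applies to each. Assembling these observations gives max-min convexity of $B$, completing the proof that a $k$-dimensional quasibox is a $k$-dimensional (open) polytrope.
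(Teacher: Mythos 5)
Your proof is correct, and it establishes exactly what is needed: conventional convexity, relative openness and dimension $k$ (via the diagonal directions $g^i=\sum_{\ell\in J_i}e_\ell$), and max-min convexity via block-constancy plus the sandwiching of each coordinate between the endpoint values. The route differs from the paper's in one respect worth noting. You derive the key facts from the structural description of segments (Theorems~\ref{tcomp} and~\ref{tincomp}), which forces you to split into comparable and incomparable endpoints, check that $z\oplus z'\in B$, and argue through the sets $M(\beta),L(\beta),H(\beta)$. The paper instead writes a generic point of $[z,\zeta]_\oplus$ coordinatewise as $\tau_\ell=\max(\min(\alpha,z_\ell),\min(\beta,\zeta_\ell))$ and uses the single scalar inequality
\[
\min(x,y)\le \max(\min(\alpha,x),\min(\beta,y))\le \max(x,y)
\qquad (\max(\alpha,\beta)=1),
\]
checked by enumerating the orderings of $\{x,y,\alpha,\beta\}$. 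This gives the interval membership $t_i-\epsilon<\tau_\ell<t_i+\epsilon$ for $\ell\in J_i$ and $\tau_\ell=y_\ell$ for $\ell\notin J$ in one stroke, with no case split on comparability; block-constancy is automatic because $\tau_\ell$ depends only on $(\alpha,\beta,z_\ell,\zeta_\ell)$, which are constant on each $J_i$ --- the same observation that powers your argument. So your proof buys nothing extra but leans on heavier machinery; the paper's is the more elementary and self-contained of the two. The only substantive point you add is the remark that for $t_i$ near $0$ or $1$ one must take $\epsilon$ small enough that the quasibox stays inside $\mmset^d$, which the paper leaves implicit.
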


\begin{proof} A quasibox is obviously conventionally convex, so we
only need to show that it is max-min convex. Let $B:=B^{\epsilon}_y(J_1,\dots,J_k)$
be a quasibox defined by~\eqref{e:quasibox}, $z,\zeta\in  B$
and $\tau\in [z,\zeta]_{\oplus}$. Then
$\tau_{\ell}=y_{\ell}, \ell \not\in J$ and
$t_i-\epsilon<\tau_{\ell}<t_i+\epsilon$ if $\ell\in J_i$ due to the inequality
\[
\min(x,y)\le \max(\min(\alpha,x),\min(\beta,y))\le \max(x,y),
\]
which is true for all $x,y\in\mmset$ and
$\alpha,\beta\in \mmset$ such that $\max(\alpha,\beta)=1$, and which can be easily checked
by looking at all possible orders on $\{x,y,\alpha,\beta\}$.
\end{proof}

\begin{remark} {\rm As Figure~\ref{f:polytropes} shows, there are many polytropes that are
not quasiboxes.}
\end{remark}

\begin{corollary}
\label{c:dim-simple} The dimension $\dim(C)$ of a max-min convex set
$C\subseteq\mmset^d$ is equal to the greatest number $k$ such that $C$
contains a $k$-dimensional open quasibox.
\end{corollary}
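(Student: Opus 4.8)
The plan is to derive the corollary from Theorem~\ref{t:quasibox}, the preceding lemma, and Definition~\ref{def:dim} by a short two-sided inequality argument. Let $k^*$ denote the greatest number such that $C$ contains a $k^*$-dimensional open quasibox, and recall $\dim(C)$ is the greatest $k$ such that $C$ contains a $k$-dimensional open polytrope. The goal is to show $k^* = \dim(C)$.

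For the inequality $k^* \le \dim(C)$: by the lemma just proved, every $k$-dimensional quasibox is a $k$-dimensional polytrope, hence a $k$-dimensional open polytrope contained in $C$. So if $C$ contains a $k^*$-dimensional open quasibox, it contains a $k^*$-dimensional open polytrope, whence $\dim(C)\ge k^*$ by Definition~\ref{def:dim}. For the reverse inequality $\dim(C)\le k^*$: suppose $C$ contains a $k$-dimensional open polytrope $P\subseteq C$ with $k=\dim(C)$. Pick any $y\in P$ and apply Theorem~\ref{t:quasibox} to $P$ (which is a $k$-dimensional open polytrope in $\mmset^d$): it yields pairwise disjoint index sets $J_1,\dots,J_k$ and scalars such that a $k$-dimensional open quasibox $B_y^{\epsilon}(J_1,\dots,J_k)$ is contained in $P$, and hence in $C$. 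Therefore $C$ contains a $k$-dimensional open quasibox, so $k^*\ge k=\dim(C)$. Combining the two inequalities gives $k^*=\dim(C)$, which is exactly the assertion of the corollary.

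There is essentially no obstacle here: all the real work has already been done in Theorem~\ref{t:quasibox} (producing a quasibox of full dimension $k$ inside any $k$-dimensional open polytrope) and in the lemma (a quasibox of dimension $k$ is itself a $k$-dimensional polytrope). The only point requiring a word of care is that Theorem~\ref{t:quasibox} is stated for a $k$-dimensional open polytrope $C\subseteq\mmset^d$, and I am applying it to the sub-polytrope $P$ rather than to the ambient set; this is legitimate since $P$ is itself a $k$-dimensional open polytrope in $\mmset^d$, and the quasibox it produces lies in $P\subseteq C$. One should also note that the quasibox produced has dimension exactly $k$, since the index sets $J_1,\dots,J_k$ are pairwise disjoint and each contributes one free parameter $s_i$, so $B_y^{\epsilon}(J_1,\dots,J_k)$ is a relatively open conventionally convex set of dimension $k$. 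With that observation recorded, the corollary follows immediately.
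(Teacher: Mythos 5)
Your proof is correct and follows essentially the same route as the paper's: the upper bound comes from the fact that a quasibox is a polytrope (the preceding lemma), and the lower bound from applying Theorem~\ref{t:quasibox} to a $k$-dimensional open polytrope contained in $C$ to extract a $k$-dimensional open quasibox. The remarks about applying the theorem to the sub-polytrope $P$ and about the quasibox having dimension exactly $k$ are sound and merely make explicit what the paper leaves implicit.
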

\begin{proof}
Let $k=\dim(C)$. Then $C$ contains a $k$-dimensional (relatively)
open polytrope and, by Theorem~\ref{t:quasibox}, it also contains
a $k$-dimensional open quasibox. A quasibox of greater dimension
cannot be contained in $C$, since any quasibox is a polytrope.
\end{proof}

We now investigate the change of dimension under homogenization.
In fact, unlike in the usual convexity or max-plus convexity, the set $\lambda\otimes C:=\{\lambda\otimes x\mid x\in C\}$ does not look like a homothety of $C$, since the multiplication is not invertible. In particular, the dimension can also change. Consider the following example displayed on Figure~\ref{f:homothety}. Let $\lambda$ decrease from $1$ to $0$. Before $\lambda$ reaches $\lambda_4$ we have $\lambda\otimes C=C$. As $\lambda$ decreases from $\lambda_4$ to
$\lambda_3$, we see that $\lambda\otimes C$ is steadily ``swept'' towards the origin, but it still has a two-dimensional
region so that $\dim(\lambda\otimes C)=2$. The set $\lambda\otimes C$ becomes one-dimensional at $\lambda=\lambda_3$, consisting of two segments, one horizontal and one vertical. At $\lambda=\lambda_2$ the set $\lambda\otimes C$ becomes a single vertical segment, and at $\lambda=\lambda_1$ it shrinks to a point. The point moves towards the origin along the diagonal as $\lambda$ gets closer to $0$. The last subfigure displays the convex hull
$\conv(0,C)$, which is the least subsemimodule containing $C$, and also the projection of $V_C\subseteq \mmset^3$ onto the first $k=2$ coordinates.

\begin{figure}[h]
\begin{subfigure}{.4\textwidth}
\centering
\begin{tikzpicture}[scale=.5]
\draw (0,0)--(8,0)--(8,8)--(0,8)--(0,0)--(8,8);

\draw [fill=lightgray] (5,5)--(5,3)--(7,3)--(7,5)--(5,5);

\draw [line width = 2] (2, 5)--(7,5)--(7,1);
\draw [line width = 2] (5,5)--(5,3)--(7,3);

\node at (7,-.5) {$\lambda_4$};
\node at (5,-.5) {$\lambda_3$};
\node at (2,-.5) {$\lambda_2$};
\node at (1,-.5) {$\lambda_1$};

\draw [dotted, line width = 1] (7,0)--(7,3);
\draw [dotted, line width = 1] (5,0)--(5,5);
\draw [dotted, line width = 1] (2,0)--(2,5);
\draw [dotted, line width = 1] (0,1)--(7,1);
\draw [dotted, line width = 1] (1,1)--(1,0);
\end{tikzpicture}
\label{subf1}
\caption{Max-min convex set $C$}
\end{subfigure}
\qquad \qquad
\begin{subfigure}{.4\textwidth}
\centering
\begin{tikzpicture}[scale=.5]
\draw (0,0)--(8,0)--(8,8)--(0,8)--(0,0)--(8,8);

\draw [fill=lightgray] (5,5)--(5,3)--(7,3)--(7,5)--(5,5);

\draw [fill=darkgray] (5,5)--(5,3)--(6,3)--(6,5)--(5,5);

\draw [line width = 2] (2, 5)--(7,5)--(7,1);
\draw [line width = 2] (5,5)--(5,3)--(7,3);

\draw [line width = 2] (6,3)--(6,1);

\draw [line width = 2] (2,4)--(4,4)--(4,1);

\draw [line width = 2] (1.5,1)--(1.5,1.5);

\draw[fill] (.5,.5) circle [radius=.1];

\node at (7,-.5) {$\lambda_4$};
\node at (5,-.5) {$\lambda_3$};
\node at (2,-.5) {$\lambda_2$};
\node at (1,-.5) {$\lambda_1$};

\draw [dotted, line width = 1] (7,0)--(7,3);
\draw [dotted, line width = 1] (5,0)--(5,5);
\draw [dotted, line width = 1] (2,0)--(2,5);
\draw [dotted, line width = 1] (0,1)--(7,1);
\draw [dotted, line width = 1] (1,1)--(1,0);
\end{tikzpicture}
\label{subf2}
\caption{Max-min convex sets $\lambda C$}
\end{subfigure}
\quad\quad
\begin{subfigure}{.45\textwidth}
\centering
\begin{tikzpicture}[scale=.5]
\draw [fill=lightgray, line width = 2] (0,0)--(1,1)--(7,1)--(7,5)--(2,5)--(2,2)--(0,0);

\node at (7,-.5) {$\lambda_4$};
\node at (5,-.5) {$\lambda_3$};
\node at (2,-.5) {$\lambda_2$};
\node at (1,-.5) {$\lambda_1$};

\draw [dotted, line width = 1] (7,0)--(7,3);
\draw [dotted, line width = 1] (5,0)--(5,5);
\draw [dotted, line width = 1] (2,0)--(2,5);
\draw [dotted, line width = 1] (0,1)--(7,1);
\draw [dotted, line width = 1] (1,1)--(1,0);
\draw (0,0)--(8,0)--(8,8)--(0,8)--(0,0)--(8,8);
\end{tikzpicture}
\label{subf3}
\caption{Max-min convex hull of $C$ and $\bzero$}
\end{subfigure}
\caption{The behavior of $\lambda\otimes C$\label{f:homothety}}
\end{figure}

\begin{lemma}
\label{l:homothety}
Let $C\subseteq\mmset^d$ be a max-min convex set. Then
$\dim(\lambda\otimes C)\leq\dim(C)$ for all $0\leq\lambda\leq 1$.
\end{lemma}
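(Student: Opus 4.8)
The plan is to use the characterization of dimension via quasiboxes (Corollary~\ref{c:dim-simple}): it suffices to show that if $\lambda\otimes C$ contains a $k$-dimensional open quasibox, then so does $C$. So suppose $B:=B_z^{\epsilon}(J_1,\ldots,J_k)\subseteq\lambda\otimes C$ for some point $z$, index sets $J_1,\ldots,J_k$ and scalars $t_1,\ldots,t_k$ as in~\eqref{e:quasibox}. I want to produce a $k$-dimensional open quasibox inside $C$ itself.

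First I would analyze what the operation $x\mapsto\lambda\otimes x=(\min(\lambda,x_1),\ldots,\min(\lambda,x_d))$ does to a quasibox. On each block $J_i$, the value $s_i$ ranges over the open interval $(t_i-\epsilon,t_i+\epsilon)$, and $\min(\lambda,s_i)$ fills an open interval only if $t_i+\epsilon\le\lambda$, i.e.\ only on those blocks whose values stay strictly below $\lambda$; on a block with $t_i-\epsilon\ge\lambda$ the image is the single point $\lambda$, and a block straddling $\lambda$ cannot arise since $B$ has full (relative) dimension $k$ in all $k$ coordinates $s_1,\ldots,s_k$. Hence every block $J_i$ of the quasibox $B$ satisfies $t_i+\epsilon\le\lambda$, so the projection $\pi$ of $\mmset^d$ onto the coordinates $\bigcup_i J_i$ maps $\lambda\otimes C$ onto a set containing an honest $k$-dimensional open box in those coordinates, and $\pi$ restricted to $\{x\mid x_\ell<\lambda\ \forall\ell\in\bigcup J_i\}$ coincides with $\pi\circ(\lambda\otimes\cdot)$ (the $\min$ with $\lambda$ acts as the identity there). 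Concretely, pick a preimage: there is $x^0\in C$ with $\lambda\otimes x^0=z^0$ for a chosen interior point $z^0$ of $B$; I would like a whole $k$-dimensional quasibox of such preimages.

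The cleanest route is then to work directly with the max-min convexity of $C$ rather than chasing preimages. Let $x^0\in C$ satisfy $\lambda\otimes x^0\in B$, and for each $i$ let $x^i\in C$ satisfy $\lambda\otimes x^i=$ the point of $B$ obtained from $z^0$ by pushing the $i$-th block up to $t_i+\epsilon'$ for some $0<\epsilon'<\epsilon$. Since $\min(\lambda,x^i_\ell)=t_i+\epsilon'<\lambda$ on $J_i$, we get $x^i_\ell=t_i+\epsilon'$ exactly, for $\ell\in J_i$; and on the other blocks $J_{i'}$, $i'\ne i$, we get $\min(\lambda,x^i_\ell)=t_{i'}<\lambda$, hence $x^i_\ell=t_{i'}$ there as well. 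Now consider max-min convex combinations of $x^0,x^1,\ldots,x^k$ inside $C$ with coefficient $1$ on $x^0$: by Theorem~\ref{tcomp} (and Lemma~\ref{l:mmsegm}-type reasoning on each block, which varies together) one can steer each block $J_i$ independently through a small open interval around $t_i+\epsilon'$ while keeping the off-$J$ coordinates fixed near $y_\ell$ and keeping the other blocks fixed — exactly because the values on block $J_i$ are all equal and all blocks sit strictly below $\lambda$, so no ``clipping'' interferes. This produces a $k$-dimensional open quasibox contained in $C$, giving $\dim(C)\ge k$ and hence, taking $k=\dim(\lambda\otimes C)$, the desired inequality.

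The main obstacle will be the last step: verifying that one really can move each block of the quasibox independently inside $C$, i.e.\ that the required small perturbations of $x^0$ along the coordinate directions $\sum_{\ell\in J_i}e_\ell$ all lie in $C$. The point to nail down is that near $x^0$ the max-min segments from $x^0$ to the $x^i$ start out as ordinary segments in exactly the directions $e^{J_i}=\sum_{\ell\in J_i}e_\ell$ — this is where one invokes Theorem~\ref{tcomp}(ii),(iii) and the fact that on $J_i$ the coordinates of $x^0$ are all equal to $t_i$ (inherited from $z^0$, since $t_i<\lambda$) while those of $x^i$ are all equal to $t_i+\epsilon'$, so Lemma~\ref{l:mmsegm}(ii) applies and the first elementary subsegment is genuinely $1$-dimensional in direction $e^{J_i}$. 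Combining these $k$ one-parameter moves using max-min convexity of $C$ (and the inequality $\min(x,y)\le\max(\min(\alpha,x),\min(\beta,y))\le\max(x,y)$ used in the ``quasibox is a polytrope'' lemma, which shows the combined perturbations stay in a quasibox) yields the quasibox in $C$.
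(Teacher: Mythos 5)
Your proposal is correct and follows essentially the same route as the paper's proof: reduce to quasiboxes via Corollary~\ref{c:dim-simple}, observe that the containment $B\subseteq\lambda\otimes C$ forces $t_i+\epsilon\leq\lambda$ and hence pins the $J$-coordinates of any preimage, and then realize a $k$-dimensional quasibox inside $C$ as a family of max-min combinations having one coefficient equal to $1$. The only difference is in the parametrization --- the paper takes $u\oplus z$ with $z$ ranging over the upper part of the given quasibox (writing $z=\lambda\otimes v$ to see membership in $C$), whereas you fix $k+1$ preimages $x^0,\ldots,x^k$ and vary the scalars $\mu_i$ --- and your closing worry about whether the perturbations stay in $C$ is unnecessary, since $x^0\oplus\bigoplus_i\mu_i\otimes x^i$ lies in $C$ directly by~\eqref{convX}, with the coordinatewise evaluation giving exactly the claimed open quasibox.
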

\begin{proof}
Let $k=\dim(\lambda\otimes C)$. Then for some $y\in C$ that
satisfies condition (i) of Theorem~\ref{t:quasibox}, for some
numbers $t_1,\ldots,t_k$, and some subsets $J_1,\ldots,J_k$ of
$\{1,\ldots,d\}$, the set $\lambda\otimes C$ contains a quasibox
$B_y^{\epsilon}(J_1,\ldots,J_k)$ defined by~\eqref{e:quasibox}. As
$B_y^{\epsilon}(J_1,\ldots,J_k)\subseteq\lambda\otimes C$ we obtain
that $\lambda\geq t_i+\epsilon$ for all $i=1,\ldots,k$. Now let
$y=\lambda\otimes u$ for some $u\in C$ and consider any point $z\in
B_y^{\epsilon}(J_1,\ldots,J_k)$ with $z\geq y$.
{\bf Since $\lambda\geq t_i+\epsilon$ for all $i=1,\ldots,k$, we have
$u_j=y_j$ for all $j\in J_1\cup\ldots\cup J_k$, and hence
the components
$(u\oplus z)_j$ with $j\in J_1\cup\ldots\cup J_k$ are equal to those
of $y\oplus z=z$.} The components $(u\oplus z)_j$ with $j\notin
J_1\cup\ldots\cup J_k$ are equal to those of $u$, due to the fact
that in this case the components of $z$ coincide to the components
of $y$ and due to the formula $\max(a,\min(a,b))=a,$ which is true
for all $a,b\in \mmset$. So these components of $u$ are independent
of $z$. It follows that the points $u\oplus z$, for $z\geq y$ and
$z\in B_y^{\epsilon}(J_1,\ldots,J_k)$, form a set which contains
$B_x^{\epsilon/2}(J_1,\ldots,J_k)$, where $x_{\ell}=t_i+\epsilon/2$
for each $\ell\in J_i$ and $i\in\{1,\ldots,k\}$ and
$x_{\ell}=u_{\ell}$ for $\ell\notin J_1\cup\ldots\cup J_k$. However,
$z=\mu v$ for some $v\in C$ and hence $u\oplus z=u\oplus\lambda v\in
C$. It follows that $B_x^{\epsilon/2}(J_1,\ldots,J_k)\subseteq C$
and $\dim C\geq k$. The proof is complete.
\end{proof}

\begin{theorem}
\label{t:homothety}
Let $C\subseteq\mmset^d$ be a max-min convex set and let
$V_C\subseteq\mmset^{d+1}$ be the homogenization of $C$.
Then $\dim(V_C)=\dim(C)+1$.
\end{theorem}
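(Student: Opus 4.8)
The plan is to establish the two inequalities $\dim(V_C)\le\dim(C)+1$ and $\dim(V_C)\ge\dim(C)+1$ separately. In both directions I would reduce matters to quasiboxes via Corollary~\ref{c:dim-simple} (applicable since $V_C$ is a subsemimodule, hence max-min convex), and for the upper bound I would exploit Lemma~\ref{l:homothety}. The only structural facts needed about $V_C$ are that it is max-min convex and that intersecting it with the hyperplane $\{w_{d+1}=\lambda^*\}$ and reading off the first $d$ coordinates gives precisely the set $\lambda^*\otimes C$ (because a point of $V_C$ has last coordinate $\lambda$ equal to its scaling factor).

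For the upper bound I would take an arbitrary $m$-dimensional open quasibox $Q\subseteq V_C$, with pairwise disjoint blocks $I_1,\dots,I_m\subseteq\{1,\dots,d+1\}$, and pick $\lambda^*\in\mmset$ to be the value of the last coordinate on $Q$ when $d+1$ lies in no block, and the midpoint of the interval of the (unique) block containing $d+1$ otherwise. Intersecting $Q$ with $\{w_{d+1}=\lambda^*\}$ and deleting the now constant last coordinate yields an open quasibox $Q'\subseteq\mmset^d$ of dimension $m$ or $m-1$, according as $d+1$ belongs to no block or to one (fixing the last coordinate forces the whole block containing $d+1$ to be constant). Since $Q'\subseteq\lambda^*\otimes C$, we get $\dim(\lambda^*\otimes C)\ge m-1$, and then Lemma~\ref{l:homothety} gives $\dim(C)\ge\dim(\lambda^*\otimes C)\ge m-1$, i.e.\ $m\le\dim(C)+1$. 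As $Q$ was arbitrary and $\dim(V_C)$ is the largest dimension of a quasibox contained in $V_C$, this proves $\dim(V_C)\le\dim(C)+1$.

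For the lower bound I would set $k:=\dim(C)$ and, by Corollary~\ref{c:dim-simple}, fix a $k$-dimensional open quasibox $B=B_y^{\epsilon}(J_1,\dots,J_k)\subseteq C$. Since $B$ is relatively open, the closed interval $[t_i-\epsilon,t_i+\epsilon]$ is contained in $\mmset$ for every $i$, so $0<t_i<1$; shrinking $\epsilon$ if necessary I may also assume $t_i+\epsilon<1$ for all $i$. I would then put $J_{k+1}:=\{d+1\}\cup\{\ell\notin J_1\cup\dots\cup J_k\mid y_\ell=1\}$, disjoint from $J_1,\dots,J_k$, and claim that for $\lambda$ ranging over a small interval just below $1$ the points $(\lambda\otimes z,\lambda)$ with $z\in B$ sweep out a $(k+1)$-dimensional open quasibox with blocks $J_1,\dots,J_k,J_{k+1}$. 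Indeed, once $\lambda$ exceeds every $t_i+\epsilon$ and every value $y_\ell<1$ with $\ell\notin J_1\cup\dots\cup J_k$, the coordinates of $\lambda\otimes z$ in each $J_i$ ($i\le k$) are unaffected and still vary over $(t_i-\epsilon,t_i+\epsilon)$, the coordinates outside the blocks with $y_\ell<1$ stay fixed at $y_\ell$, and the coordinates with $y_\ell=1$ together with the appended coordinate $d+1$ all equal $\lambda$. Restricting the parameters to suitably small symmetric windows then produces a genuine $(k+1)$-dimensional open quasibox contained in $V_C$, whence $\dim(V_C)\ge k+1$. Combining the two bounds gives $\dim(V_C)=\dim(C)+1$.

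The part I expect to be most delicate is the lower bound. One must check that every point $w$ of the constructed box is genuinely of the form $(\lambda\otimes z,\lambda)$ with $z\in B$: on a coordinate $\ell\notin J_1\cup\dots\cup J_k$ with $y_\ell=1$ the value of $w$ is $\lambda<1$, so the corresponding coordinate of $z$ must be ``lifted'' back to $1$ (which is legitimate because $B$ keeps those coordinates equal to $y_\ell=1$), and one must verify that $\min(\lambda,\cdot)$ then reproduces $w$ on every coordinate. This is exactly where the strict inequalities $t_i<1$ (and $y_\ell<1$ off the blocks) are used: they guarantee a nonempty admissible window for $\lambda$ below $1$ that never ``cuts'' any block coordinate $t_i+\epsilon$ or any fixed outside coordinate $y_\ell$.
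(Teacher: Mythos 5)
Your proposal is correct and follows essentially the same route as the paper: the upper bound comes from sectioning with the hyperplane $\{w_{d+1}=\lambda\}$ (whose trace on $V_C$ is exactly $(\lambda\otimes C,\lambda)$) combined with Lemma~\ref{l:homothety}, and the lower bound is the paper's own construction of a $(k+1)$-dimensional quasibox inside the homogenization of $B_y^{\epsilon}(J_1,\ldots,J_k)$, with the extra block $J_{k+1}=\{d+1\}\cup\{\ell\mid y_\ell=1\}$ and the scaling parameter confined to a window just below $1$. The only cosmetic difference is that you phrase the upper bound directly in terms of quasiboxes via Corollary~\ref{c:dim-simple}, whereas the paper argues by contradiction with a general polytrope of dimension at least $\dim(C)+2$.
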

\begin{proof}
We first prove that $\dim(V_C)\leq\dim(C)+1$. Suppose by
contradiction that $\dim(V_C)>\dim(C)+1$. Then $V_C$ contains a
polytrope of dimension at least $\dim(C)+2$.  For some $\mu$, the
section of $V_C$ by $\{u\in\mmset^{d+1}\mid u_{d+1}=\mu\}$ has a
nontrivial intersection with that polytrope, and that intersection
is a polytrope of dimension at least $\dim(C)+1$. But the section of
$V_C$ by $\{u\in\mmset^{d+1}\mid u_{d+1}=\mu\}$ is exactly
$(\mu\otimes C,\mu)$, and the dimension of $\mu\otimes C$ does not
exceed $\dim(C)$ by Lemma~\ref{l:homothety}. This contradiction
shows that $\dim(V_C)\leq\dim(C)+1$.

We now prove that $\dim(V_C)\geq\dim(C)+1$. For this, let
$k=\dim(C)$ and let $C$ contain a quasibox
$B_y^{\epsilon}(J_1,\ldots,J_k)$ defined by~\eqref{e:quasibox} as in
Theorem~\ref{t:quasibox}. Choosing a small enough $\epsilon$ we can
assume that $t_i+\epsilon<1$ for all
 $i=1,\ldots,k$. Let $J_{k+1}$ consist of the index $d+1$ and all indices of the
 components of $y$ that are equal to $1$. Choose $\epsilon$ such that $1-2\epsilon$ is greater than all $t_i+\epsilon$ and any coordinate of $y$ not equal to $1$,  and set $t_{k+1}:=1-\epsilon$. Define the components of $\tilde{y}\in\mmset^{d+1}$ by
 $\tilde{y}_{\ell}=t_i$ for each $\ell\in J_i$ and $i\in\{1,\ldots,k+1\}$, and $\tilde{y}_{\ell}=y_{\ell}$ otherwise.
 Then the homogenization of $B_y^{\epsilon}(J_1,\ldots,J_k)$, which is by definition the set
$$
\{(\mu\otimes x,\mu)\mid x\in B_y^{\epsilon}(J_1,\ldots, J_k),\;\mu\in\mmset\},
$$
 contains the quasibox  $B_{\tilde{y}}^{\epsilon}(J_1,\ldots,J_{k+1})$. As this homogenization is 
contained in $V_C$, the dimension of $V_C$ is at least $k+1$.
\end{proof}

\section{Dimension equals rank}
\label{s:rank}

In the remaining part of the paper, following the
parallel with the tropical rank considered by Develin, Santos
and Sturmfels~\cite{develin-etc} in the max-plus algebra,
we investigate how our notion of dimension relates with the notion of strong regularity in max-min algebra. For
$A\in\mmset(d,m+1)$, the $i$th column will be denoted by $A_{\bullet i}$.

\begin{definition}
\label{def:regular} A matrix $A\in\mmset(k,k+1)$ is called strongly regular
 if there exists an index $j: 1\leq j\leq k+1$, a bijection
 $\pi\colon\{1,\ldots,k\}\to\{1,\ldots,k+1\}\backslash\{j\}$ and
coefficients $\lambda_1,\ldots,\lambda_{j-1},\lambda_{j+1},\dots,\lambda_{k+1}\in\mmset$ such that in
the matrix
\begin{equation}
\label{lambda-matrix}
A[\lambda]:=(\lambda_1\otimes A_{\bullet 1},\dots, \lambda_{j-1}\otimes A_{\bullet j-1},
A_{\bullet j}, \lambda_{j+1}\otimes A_{\bullet j+1}, \dots, \lambda_{k+1}\otimes A_{\bullet k+1})
\end{equation}
the maximum in each row $i\in\{1,\ldots,k\}$ equals $\lambda_{\pi(i)}$ and is attained only by the term
$\pi(i)\in\{1,\ldots,k+1\}\backslash\{j\}$. We will say that the
coefficients $\lambda_i$ and bijection $\pi$ certify the strong regularity of $A$.
\end{definition}

\begin{remark}
\label{r:lambdas}
In Definition~\ref{def:regular}, the coefficients $\lambda_i$ are all nonzero. Furthermore, by slightly
decreasing these coefficients we can assume that they are all different and distinct from $1$ and the entries of $A$.
\end{remark}

For $A\in\mmset(d,m+1)$, let $\conv(A)$ denote the max-min convex hull
of the columns of $A$.

\begin{definition} Let $A\in\mmset(d,m+1)$. We call the max-min rank  and denote by $\rank(A)$ the largest integer
$k$ such that $A$ contains a strongly regular $k\times(k+1)$ submatrix.
\end{definition}

\begin{remark}
\label{r:rectang}
{\rm Note that the definition of strong regularity is introduced here for $k\times (k+1)$
rectangular matrices. A more usual ``square'' version of this
definition will appear in the next section, and we will show that it
is equivalent to the one studied in~\cite{BCS-87,Gav-01}.}
\end{remark}


The following theorem can be considered as one of the main result of this paper.

\begin{theorem}\label{t:mainres} Let $A=(a_{ij})\in \mmset(d, m+1)$.
Then $\dim(\conv(A))=\rank(A)$.
\end{theorem}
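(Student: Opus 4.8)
The plan is to prove the two inequalities $\dim(\conv(A))\geq\rank(A)$ and $\dim(\conv(A))\leq\rank(A)$ separately, exploiting the quasibox characterization of dimension from Corollary~\ref{c:dim-simple} and the quasibox/homogenization machinery developed in Section~\ref{s:dim}.

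For the inequality $\dim(\conv(A))\geq\rank(A)$, I would start from a strongly regular $k\times(k+1)$ submatrix $\tilde A$ of $A$, certified by coefficients $\lambda_i$ and a bijection $\pi$ as in Definition~\ref{def:regular}. Using Remark~\ref{r:lambdas}, I may assume the $\lambda_i$ are distinct and distinct from $1$ and from the entries of $\tilde A$. The idea is that the matrix $\tilde A[\lambda]$ has the trapezoidal-like structure where row $i$ has a strict maximum $\lambda_{\pi(i)}$ attained only in column $\pi(i)$. Consider the point $y=\bigoplus_{\ell\neq j}\lambda_\ell\otimes \tilde A_{\bullet\ell}\oplus \tilde A_{\bullet j}$ (i.e.\ the max over the columns of $\tilde A[\lambda]$), which lies in $\conv(\tilde A)$, hence (after embedding the $k$ rows back into $\mathbb{R}^d$ and taking the convex hull with the origin if needed --- actually $\conv(\tilde A)\subseteq\conv(A)$ directly on the relevant coordinate set) in $\conv(A)$. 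Perturbing the single coefficient $\lambda_{\pi(i)}$ slightly up or down changes only coordinate $i$ of this combination, because $\lambda_{\pi(i)}$ is the unique maximiser in row $i$ and all quantities are in general position; this produces, around $y$, a $k$-dimensional open box in the $k$ coordinates of the submatrix, i.e.\ a $k$-dimensional open quasibox (with singleton $J_i$'s) contained in $\conv(A)$. By Corollary~\ref{c:dim-simple}, $\dim(\conv(A))\geq k$.

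For the inequality $\dim(\conv(A))\leq\rank(A)$, suppose $\dim(\conv(A))=k$. By Corollary~\ref{c:dim-simple}, $\conv(A)$ contains a $k$-dimensional open quasibox $B_y^{\epsilon}(J_1,\ldots,J_k)$. I want to extract from the columns of $A$ a strongly regular $k\times(k+1)$ submatrix. Pick one representative row index $i_r\in J_r$ for each $r=1,\ldots,k$; this gives $k$ rows. For the columns, use a Carath\'eodory-type argument: the center $y$ of the quasibox is a max-min convex combination of columns of $A$, and by the max-min Carath\'eodory theorem it uses boundedly many columns; more carefully, for each of the $2k$ "extreme" vertices of the box $y\pm\epsilon' g^r$ (for small $\epsilon'$), which all lie in $\conv(A)$, I express them as max-min convex combinations and track which columns are "active" in which coordinate. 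The fact that moving along direction $g^r$ changes exactly the coordinates in $J_r$ and leaves the others fixed forces a column whose contribution dominates precisely on row $i_r$ for a suitable scaling; choosing $k$ such columns together with one extra column that supplies the shared level $1$ (the role played by index $j$ and the "largest-support" vector) yields the $k\times(k+1)$ array, and the general position of the box coordinates $t_r$ gives the strict-maximum / bijection conditions of Definition~\ref{def:regular}.

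The main obstacle I anticipate is the second inequality, specifically turning the geometric quasibox into the combinatorial certificate of strong regularity: one must show that the $k$ independent "directions of freedom" of the quasibox can be realised by perturbing $k$ distinct coefficients attached to $k$ distinct columns of $A$, each governing a distinct row, with a $(k+1)$st column playing the role of the fixed reference ($A_{\bullet j}$). This is essentially a matching/transversality statement and is where I expect to need Theorem~\ref{t:quasibox} (to know the base directions have disjoint supports and equal-on-support structure), Lemma~\ref{l:mmsegm}, and a careful bookkeeping of which column attains the maximum in which coordinate as the convex-combination coefficients vary; handling the coordinates where $y$ equals $1$ (which behave specially under $\otimes$, cf.\ the proof of Lemma~\ref{l:homothety}) and possibly passing through the homogenization $V_C$ via Theorem~\ref{t:homothety} to normalise the reference column will require care. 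The remaining steps --- the general-position perturbations of Remark~\ref{r:lambdas} and the box-to-quasibox identifications --- are routine given the earlier lemmas.
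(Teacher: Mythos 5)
Your overall strategy (two inequalities, quasiboxes via Corollary~\ref{c:dim-simple}) is the paper's, and your first half is essentially the paper's argument, but both halves have gaps. In the inequality $\rank(A)\leq\dim(\conv(A))$, the point $y$ and the perturbed points must be max-min combinations of the \emph{full} $d$-dimensional columns of $A$, not of the $k\times(k+1)$ submatrix $\tilde A$; your parenthetical ``$\conv(\tilde A)\subseteq\conv(A)$ directly on the relevant coordinate set'' conflates $\conv(\tilde A)$ with a projection of $\conv(A)$, and a box in a projection does not automatically lift to a quasibox in $\conv(A)$. One must check that, as $\lambda_{\pi(i)}$ is \emph{decreased} (increasing it can fail: the term saturates at $a_{i\pi(i)}$ or overtakes the maximum in another row), the $d-k$ coordinates outside the chosen rows stay constant. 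The paper does this by showing the row maxima outside the sets $J_i$ are entries of $A$ and by introducing a uniform gap $\kappa=\min_i(\lambda_i-m_i)$ controlling how far each $\lambda_i$ may be lowered; also the resulting $J_i$ are generally \emph{not} singletons --- they are all rows of $A'[\lambda]$ where column $i$ uniquely attains the maximum. These details are absent from your sketch but are fillable, so this half is essentially sound in outline.

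The second inequality is where you are missing the key idea, as you yourself suspect. After projecting onto one representative row from each $J_r$ (the paper does exactly this), you do not need Carath\'eodory, the $2k$ box vertices, or any matching argument. The projected quasibox is an ordinary open $k$-dimensional box, so it contains a single point $x=(x_1,\dots,x_k)$ in \emph{general position}: all coordinates pairwise distinct and distinct from every entry of the row-submatrix $A''$. Writing $x$ as a max-min convex combination $x=\bigoplus_s\mu_s\otimes A''_{\bullet s}$ with, say, $\mu_{m+1}=1$, each row maximum $x_i$ cannot equal any entry $a''_{is}$ (general position), hence must equal some coefficient $\mu_{\pi(i)}$ with $a''_{i\pi(i)}>\mu_{\pi(i)}$; distinctness of the $x_i$ forces the $\pi(i)$ to be distinct, and it forces the maximum in row $i$, restricted to the columns $\pi(1),\dots,\pi(k),m+1$, to be attained only by the term $\pi(i)$. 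The $k\times(k+1)$ submatrix on these columns is then strongly regular by Definition~\ref{def:regular}, with $j=m+1$. Your proposed route --- tracking active columns across perturbed vertices and extracting a transversal --- is not carried out, and it is unclear it would close without effectively rediscovering this one-point genericity argument; in particular nothing in your sketch guarantees that the $k$ ``dominating'' columns you hope to find are distinct, which is exactly what the distinctness of the $x_i$ delivers for free.
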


\begin{proof} We first suppose that $A$ contains a strongly regular
$k\times (k+1)$ submatrix, and show that $\dim(\conv(A))$ is at
least $k$. Without loss of generality we assume that this strongly
regular submatrix is extracted from the first $k$ rows and $k+1$
columns of $A$, and that $j=k+1$ in~\eqref{lambda-matrix}.
 Let $A'$ be the submatrix of $A$ extracted from the first $k+1$ columns.
 Since $\conv(A')\subseteq\conv(A)$, we have
 $\dim(\conv(A'))\leq\dim(\conv(A))$, so it suffices to prove that $\dim(\conv(A'))\geq k$.
For each column
$i:1\leq i\leq k$ there is a row where the maximum in
$A'[\lambda]$~\eqref{lambda-matrix} is attained only
by the $i$th term.  We assume that the
$\lambda_1,\ldots,\lambda_k$ are all different and distinct from the entries of $A'$.
With this, let $J_i$, for $1\leq i\leq k$, be the set of rows of $A'[\lambda]$ where
the only maximum is attained by the $i$th column and equals
$\lambda_i$. Let $J=J_1\cup\ldots\cup J_k$ and for $\ell\notin J$, if such indices exist,
let $\alpha_{\ell}$ be the maximum of the $\ell$th row of $A'[\lambda]$. Observe that this
maximum is equal to an entry of $A$. For each $i\colon 1\leq i\leq
k$, set
\begin{equation}
\label{e:mikappa}
\begin{split}
m_i&:=\max\{\max\{\alpha_\ell\colon \ell\notin J,\;\alpha_\ell<\lambda_i\},
\max\{\lambda_s\otimes a_{\ell s}\colon \ell\in J_i,\,s\neq i\}\},\\
\kappa&:= \min_{1\leq i\leq k} (\lambda_i-m_i).
\end{split}
\end{equation}
If the set $\{\alpha_{\ell}\colon\;\ell\notin J,\; \alpha_{\ell}
<\lambda_i\}$ is empty, then we assume that its maximum is zero. Observe that $\max\{\lambda_s\otimes a_{\ell s}\colon \ell\in
J_i,\,s\neq i\}< \lambda_i$ by the definition of $J_i$, hence
$m_i<\lambda_i$. For any vector
$\epsilon=(\epsilon_1,\ldots,\epsilon_k)$ such that $0\leq
\epsilon_i<\kappa$ for all $i=1,\ldots,k$, define the
vector-function $y(\epsilon)$:
\begin{equation}
\label{e:yepsdef}
y_{\ell}(\epsilon)=
\begin{cases}
\lambda_i-\epsilon_i, &\text{if $\ell\in J_i$ and $1\leq i\leq k$},\\
\alpha_{\ell}, &\text{if $\ell\notin J$}.
\end{cases}
\end{equation}
Using the definition of $\kappa$ and the fact that $y(0)$ is the
max-min linear combination of the columns of $A'$ with coefficients
$\lambda_1,\ldots,\lambda_k,1,$ we obtain that $y(\epsilon)$ is the
max-min linear combination  of the columns of $A'$ with coefficients
$\lambda_1-\epsilon_1,\ldots,\lambda_k-\epsilon_k,1$ for any
$\epsilon\colon 0\leq \epsilon_i<\kappa$ where $i=1,\ldots,k$.
Denote $\overline{y}:=y(\kappa/2,\ldots,\kappa/2)$. Then the
quasibox 
\[B_{\overline{y}}^{\kappa/2}(J_1,\ldots,J_k)=\{y(\epsilon): 0<\epsilon_i<\kappa,i=1,\ldots,k\},
\]
is contained in $\conv(A')$ and in $\conv(A)$. Since
$B_{\overline{y}}^{\kappa/2}(J_1,\ldots,J_k)$ is a $k$-dimensional
quasibox, this shows that $\dim(\conv(A))\geq k$.

Next, we have to show that given $k=\dim(\conv(A)$, there is a
strongly regular $k\times (k+1)$ submatrix. By
Theorem~\ref{t:quasibox}, $\conv(A)$ contains a $k$-dimensional
quasibox~\eqref{e:quasibox}. Then taking an element from each $J_i$,
consider the submatrix of $A$ extracted from the corresponding $k$
rows. Denote it by $A''\in\mmset(k,m+1)$. Assume that the rows are
$\{1,\ldots,k\}$. We will show that this submatrix contains an
strongly regular $k\times (k+1)$ submatrix. Indeed, being equal to
the projection of $\conv(A)$ onto the first $k$ coordinates
($(z_1,\ldots,z_k,\ldots,z_d)\mapsto(z_1,\ldots,z_k)$), $\conv(A'')$
contains the projection of the $k$-dimensional quasibox mentioned
above, and this is a usual $k$-dimensional box. This box contains a
point $x=(x_1,\ldots, x_k)$ whose all coordinates are different, and
distinct from the coefficients of $A''$. Since $x\in \conv(A'')$,
possibly permuting the columns of $A''$ we obtain that $(x_1,\ldots,
x_k)$ are the row maxima of the matrix
$$
(\mu_1\otimes A''_{\bullet 1},\dots, \mu_{m}\otimes A''_{\bullet m}, A''_{\bullet m+1}).
$$
Since $x_i$ are not equal to any entries of $A''$ and are all
different, we obtain that there is a $k$-element set
$N\subseteq\{1,\ldots,m\}$ and a bijection
$\pi\colon\{1,\ldots,k\}\to N$ such that $x_i=\mu_{\pi(i)}$ for all
$i=1,\ldots,k$, with all terms except for $\pi(i)$ being less than
$x_i$. This implies that the $k\times(k+1)$ submatrix extracted from
rows $1,\ldots,k$ and columns $\pi(1),\ldots,\pi(k),m+1$ is strongly
regular.

\end{proof}

\begin{definition} Let $A\in\mmset(d,m+1)$. We call \emph{the interior of $A$}
the interior of $\conv(A)$ in $\mmset^d$.
\end{definition}

\begin{corollary}
\label{c:interior} For $m\geq d$, $A\in\mmset(d,m+1)$ has nonempty
interior if and only if it contains a $d\times (d+1)$ strongly
regular submatrix.
\end{corollary}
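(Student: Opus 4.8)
The plan is to read this off Theorem~\ref{t:mainres} together with the elementary equivalence between full dimensionality and nonemptiness of the interior. First I would record that a max-min convex set $C\subseteq\mmset^d$ has nonempty interior if and only if $\dim(C)=d$. One direction is already observed right after Definition~\ref{def:dim}. For the converse, given a point $y$ in the interior of $C$, for all sufficiently small $\epsilon>0$ the open box $\prod_{i=1}^d(y_i-\epsilon,y_i+\epsilon)$ lies in $C$; but this box is exactly the quasibox $B_y^{\epsilon}(\{1\},\ldots,\{d\})$ with singleton blocks, which by the lemma stating that a $k$-dimensional quasibox is a $k$-dimensional polytrope is a $d$-dimensional open polytrope contained in $C$. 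Hence $\dim(C)\ge d$, and equality holds since $C\subseteq\mmset^d$.

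Applying this to $C=\conv(A)$ and invoking $\dim(\conv(A))=\rank(A)$ from Theorem~\ref{t:mainres}, we obtain that $A$ has nonempty interior if and only if $\rank(A)=d$. It then remains to translate $\rank(A)=d$ into the stated combinatorial condition. Since $\rank(A)$ is by definition the largest $k$ for which $A$ has a strongly regular $k\times(k+1)$ submatrix and $A$ has only $d$ rows, we always have $\rank(A)\le d$, so $\rank(A)=d$ is equivalent to $A$ containing \emph{some} strongly regular $d\times(d+1)$ submatrix. This is precisely the point where the hypothesis $m\ge d$ is used: it guarantees that $A\in\mmset(d,m+1)$ has at least $d+1$ columns, so that a $d\times(d+1)$ submatrix exists at all (otherwise the right-hand condition would be vacuously false). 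Chaining these equivalences finishes the argument.

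I do not expect any genuinely hard step here: the corollary is essentially bookkeeping layered on top of Theorem~\ref{t:mainres}. The only place that needs a small amount of care is the claim that an open Euclidean box in $\mmset^d$ is simultaneously conventionally and max-min convex, i.e. a polytrope; this is the quasibox lemma specialized to $J_i=\{i\}$, which rests on the inequality $\min(x,y)\le\max(\min(\alpha,x),\min(\beta,y))\le\max(x,y)$ valid whenever $\max(\alpha,\beta)=1$. Alternatively one may appeal directly to Theorem~\ref{tcomp} to see that max-min segments between two points of a box remain inside the box.
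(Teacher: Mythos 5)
Your argument is correct and follows the same route as the paper, which simply derives the corollary from Theorem~\ref{t:mainres} together with the quasibox machinery (Theorem~\ref{t:quasibox} and the lemma that quasiboxes are polytropes); you have merely spelled out the bookkeeping that the paper leaves implicit, including the correct role of the hypothesis $m\geq d$.
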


\begin{proof} The corollary follows from Theorem~\ref{t:mainres} and Theorem~\ref{t:quasibox}.
\end{proof}

\section{Strong regularity: the link to max-min algebra}

\label{s:sr}

In this section we establish a close relation between our notion of
strong regularity and the one usually studied in max-min
algebra~\cite{BCS-87,BS-06,Cec-95,Gav-01,GP-07}. With this in mind,
let us define the notion of strong regularity for square matrices,
as a slight variation of Definition~\ref{def:regular}.

\begin{definition}
\label{def:srsquare} A matrix $A\in\mmset(k,k)$ is called strongly regular if there exists a bijection
 $\pi\colon\{1,\ldots,k\}\to\{1,\ldots,k\}$ and
coefficients $\lambda_1,\ldots,\lambda_{k}\in\mmset$ such that in
the matrix
\begin{equation}
\label{lambda-matrix-square}
A[\lambda]:=(\lambda_1\otimes A_{\bullet 1},\dots,\lambda_{k}\otimes A_{\bullet k})
\end{equation}
the maximum in each row $i\in\{1,\ldots,k\}$ equals $\lambda_{\pi(i)}$ and is attained only by the term
$\pi(i)\in\{1,\ldots,k\}$. We will say that the
coefficients $\lambda_i$ and bijection $\pi$ certify the strong regularity of $A$.
\end{definition}

\begin{remark}
\label{r:srsquare}
As in Definition~\ref{def:regular}, the coefficients $\lambda_1,\ldots,\lambda_k$ can be assumed to be different from
each other, distinct from the entries of $A$, $0$ and $1$.
\end{remark}

We will show later that this notion coincides with the one studied in max-min algebra.
The proof of the following statement is omitted.

\begin{lemma}
\label{l:srsquare} $A\in\mmset(k,k)$ is strongly regular in the sense of
Definition~\ref{def:srsquare} if and only
if $[A\;\bzero]$, where $\bzero$ is the $k$
component column of all zeros, is strongly regular in the sense of
Definition~\ref{def:regular}.
\end{lemma}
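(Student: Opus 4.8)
The plan is to prove both directions of the equivalence by direct translation between the two definitions, the content being entirely bookkeeping about which coefficient is attached to which column.

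First I would prove the "if" direction. Suppose $[A\;\bzero]\in\mmset(k,k+1)$ is strongly regular in the sense of Definition~\ref{def:regular}, certified by an index $j$, a bijection $\pi\colon\{1,\dots,k\}\to\{1,\dots,k+1\}\setminus\{j\}$, and coefficients $\lambda_s$. I claim we must have $j=k+1$, i.e.\ the distinguished column (the one kept with coefficient $1$) is the zero column. Indeed, if $j\neq k+1$, then column $k+1$ of $[A\;\bzero]$ is the zero column, yet $\pi$ is a bijection onto $\{1,\dots,k+1\}\setminus\{j\}$, so some row $i$ has $\pi(i)=k+1$, meaning the row maximum of $(A[\lambda])_{i\bullet}$ equals $\lambda_{k+1}\otimes 0=0$ and is attained only by that term; but the coefficients $\lambda_s$ are nonzero (Remark~\ref{r:lambdas}), so the other $k$ entries of that row are all $0$ while one is positive --- actually one must check this carefully: the row maximum being $0$ forces $\lambda_s\otimes a_{is}=0$ for all $s$, in particular for $s=j$ the term $a_{ij}=0$, and "attained only by $\pi(i)=k+1$" is then impossible since the $j$-th term also equals $0$. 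Hence $j=k+1$. Deleting the zero column, the remaining matrix is exactly $A$ with columns permuted, and the conditions of Definition~\ref{def:regular} on the submatrix $A[\lambda]$ restricted to columns $1,\dots,k$ become precisely the conditions of Definition~\ref{def:srsquare} for $A$ (with the bijection $\pi$ now regarded as a self-map of $\{1,\dots,k\}$ after relabeling, and the same coefficients $\lambda_1,\dots,\lambda_k$).

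For the "only if" direction, suppose $A\in\mmset(k,k)$ is strongly regular via Definition~\ref{def:srsquare} with bijection $\pi$ and coefficients $\lambda_1,\dots,\lambda_k$; by Remark~\ref{r:srsquare} we may take the $\lambda_i$ distinct, nonzero, and distinct from the entries of $A$. Form $[A\;\bzero]$ and use the same $\pi$ (viewed as a bijection onto $\{1,\dots,k\}=\{1,\dots,k+1\}\setminus\{k+1\}$), the same coefficients $\lambda_1,\dots,\lambda_k$ for the first $k$ columns, and keep column $k+1$ (the zero column) with coefficient $1$. In the resulting matrix $[A\;\bzero][\lambda]$, the entries of row $i$ are $\lambda_1\otimes a_{i1},\dots,\lambda_k\otimes a_{ik}$ together with one extra entry equal to $1\otimes 0=0$. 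Since the row maximum in Definition~\ref{def:srsquare} equals $\lambda_{\pi(i)}$, which is nonzero, this extra zero entry does not affect the maximum nor the uniqueness of the maximizing term; hence the conditions of Definition~\ref{def:regular} hold with $j=k+1$. This completes the equivalence.

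The main obstacle, and essentially the only nontrivial point, is the argument in the "if" direction that the distinguished column $j$ must be the zero column; once that is pinned down the rest is a relabeling. The subtlety there is to use the nonvanishing of the coefficients $\lambda_s$ (Remark~\ref{r:lambdas}) together with the requirement that each row maximum is attained by a \emph{unique} term to rule out $\pi(i)=k+1$, since a row maximum equal to $0$ would be shared by the $j$-th term as well. Everything else is a direct check that the two lists of conditions coincide term by term.
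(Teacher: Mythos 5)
Your proof is correct. The paper itself omits the proof of this lemma, so there is no argument to compare against; your direct two-way translation---showing that the distinguished column $j$ must be the zero column (otherwise some row maximum would equal $\lambda_{k+1}\otimes 0=0$ and be shared with the $j$-th term, violating uniqueness), and conversely that adjoining the zero column in the untouched slot $j=k+1$ contributes only zero terms that cannot disturb the positive row maxima---is exactly the routine verification the authors deemed omittable. The only caveat, which is a defect of the paper's definitions rather than of your argument, is the degenerate case $k=1$, $A=(0)$: there Definition~\ref{def:srsquare} is vacuously satisfied with $\lambda_1=0$ (a single term is trivially the unique maximizer), while $[A\;\bzero]=(0\;\;0)$ fails Definition~\ref{def:regular}; so Remark~\ref{r:srsquare}, which you invoke to get $\lambda_{\pi(i)}>0$, and the lemma itself silently exclude this case, and for $k\geq 2$ the nonvanishing of the coefficients follows from the uniqueness requirement just as you argue.
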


For $A\in\mmset(m,n)$ define $\hat{A}\in\mmset(m+1,n)$ by
\begin{equation}
\label{e:hata}
\hat{A}:=
\begin{pmatrix}
A\\
\bunity
\end{pmatrix},
\end{equation}
where $\bunity$ denotes the $n$-component row of all ones. Note that if $A\in\mmset(k,k+1)$ then
$\hat{A}\in\mmset(k+1,k+1)$ is square.

The mapping $A\to\hat{A}$ can be seen as a special case of homogenization. Indeed, if we set $C:=\conv(A)$, then we have
$V_C=\spann(\hat{A})$. This has the following immediate corollary.

\begin{corollary}
\label{c:ahata} $A\in\mmset(k,k+1)$ is strongly regular (in the sense of
Definition~\ref{def:regular}) if and only
if $\hat{A}\in\mmset(k+1,k+1)$ is strongly regular(in the sense of
Definition~\ref{def:srsquare}).
\end{corollary}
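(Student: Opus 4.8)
The plan is to turn both conditions into statements about $\dim(\conv(A))$, so that they become manifestly equivalent. The two definitions of strong regularity being compared differ in two ways---$\hat A$ is square while $A$ is $k\times(k+1)$, and passing from $A$ to $\hat A$ adjoins a row of ones rather than a column of zeros---and each discrepancy is absorbed by a result already proved: Lemma~\ref{l:srsquare} trades the square definition for the rectangular one, and Theorem~\ref{t:homothety} records the dimension shift under homogenization.

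First I would note that $A\in\mmset(k,k+1)$ is its own unique $k\times(k+1)$ submatrix, so ``$A$ contains a strongly regular $k\times(k+1)$ submatrix'' just means ``$A$ is strongly regular'' in the sense of Definition~\ref{def:regular}; equivalently $\rank(A)=k$, equivalently, by Theorem~\ref{t:mainres}, $\dim(\conv(A))=k$. On the other side, by Lemma~\ref{l:srsquare} the matrix $\hat A$ is strongly regular in the sense of Definition~\ref{def:srsquare} iff $[\hat A\;\bzero]\in\mmset(k+1,k+2)$ is strongly regular in the sense of Definition~\ref{def:regular}; since $[\hat A\;\bzero]$ is also its own unique maximal rectangular submatrix, this is $\rank([\hat A\;\bzero])=k+1$, equivalently, by Theorem~\ref{t:mainres}, $\dim(\conv([\hat A\;\bzero]))=k+1$. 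Finally $\conv([\hat A\;\bzero])=\spann(\hat A)$ because adjoining the zero column realizes the identity $\spann(X)=\conv(X\cup\{0\})$, and $\spann(\hat A)=V_{\conv(A)}$ is exactly the homogenization of $C:=\conv(A)$ recalled just before the statement; hence $\dim(\conv([\hat A\;\bzero]))=\dim(V_C)=\dim(C)+1=\dim(\conv(A))+1$ by Theorem~\ref{t:homothety}. Chaining the two computations, $\hat A$ is strongly regular in the square sense iff $\dim(\conv(A))=k$ iff $A$ is strongly regular in the rectangular sense.

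There is no deep obstacle here; the only care needed is bookkeeping in the last chain---checking that $[\hat A\;\bzero]$ really is the only maximal rectangular submatrix of itself, so that ``contains a strongly regular submatrix'' collapses to ``is strongly regular'', and that the homogenization $\conv([\hat A\;\bzero])=V_{\conv(A)}$ is literally the $V_C$ appearing in Theorem~\ref{t:homothety}. As a self-contained alternative, useful as a cross-check, one can translate certificates directly: from a certificate $(j,\pi,(\lambda_i))$ for $A$, set $\mu_j:=1$ and $\mu_i:=\lambda_i$ for $i\neq j$ and extend $\pi$ by $k+1\mapsto j$; here Remark~\ref{r:lambdas} guarantees $\lambda_i<1$, so the all-ones bottom row of $\hat A[\mu]$ has its unique maximum at column $j$, while the first $k$ rows of $\hat A[\mu]$ coincide with those of $A[\lambda]$ since $\min(\mu_j,a_{ij})=a_{ij}$. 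Conversely, in any certificate for $\hat A$ the all-ones bottom row forces the coefficient on the image of $k+1$ to be the strictly largest $\mu_s$; this in turn forces every entry of the corresponding column of $A$ to lie strictly below that coefficient, so removing the scaling from that column leaves all row maxima---and their uniqueness---unchanged, producing a certificate for $A$ in the sense of Definition~\ref{def:regular}. Either route works; the homogenization one is simply shorter.
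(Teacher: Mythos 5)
Your main argument is exactly the paper's proof: translate both notions of strong regularity into dimension statements via Theorem~\ref{t:mainres} (using Lemma~\ref{l:srsquare} and the identification $\conv([\hat A\;\bzero])=\spann(\hat A)=V_{\conv(A)}$) and then apply Theorem~\ref{t:homothety}; the bookkeeping you supply is correct. Your alternative certificate-translation argument is also sound, but the primary route matches the paper, so there is nothing to add.
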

\begin{proof}
By Theorem~\ref{t:mainres}, $A\in\mmset(k,k+1)$ is strongly regular
if and only if $\dim(\conv(A))=k$, and $\hat{A}$ (that is,
$[\hat{A}\;\bzero]$) is strongly regular if and only if
$\dim(\conv([\hat{A}\;\bzero]))=\dim(\spann(\hat{A}))=k+1$.
Theorem~\ref{t:homothety} implies that these statements are
equivalent.
\end{proof}

\if{
\begin{proof}
``Only if'': If $A$ is strongly regular, then, assuming all
$\lambda_i$ certifying the strong regularity to be different from
$1$ and assuming that the certifying permutation $\pi$ is the
identity map from $\{1,\ldots,k\}$ to $\{1,\ldots,k\}$, we set
$\hat{\lambda_i}=\lambda_i$ for all $i=1,\ldots,k$ and
$\hat{\lambda}_{k+1}=1$. Coefficients $\hat{\lambda}_i$ for $i<k+1$
attain the unique maxima in the first rows of \eqref{lambda-matrix},
and $\hat{\lambda}_{k+1}$ attains the unique maximum in the last of
$\hat{A}$. Thus $\hat{\lambda}_i$ for $i=1,\ldots,k+1$ certify that
$\hat{A}$ is strongly regular

``If'': Let $\hat{A}$ be strongly regular, and assume that the
 identity mapping $\pi$ and the coefficients $\hat{\lambda}_i$ for $i=1,\ldots,k+1$ certify
that $\hat{A}$ is strongly regular In particular, the unique maximum
in the last row is attained by $\hat{\lambda}_{k+1}$. We show that
we can set $\hat{\lambda}_{k+1}:=1$. Indeed, observe that
$\hat{\lambda}_i>\hat{\lambda}_{k+1}\otimes  a_{i,k+1}$ and
$\hat{\lambda}_i<\hat{\lambda}_{k+1}$ for all $i<k+1$. Hence if we
set $\hat{\lambda}_{k+1}x=1$ then all other $\hat{\lambda}_i$ still
attain the unique maxima in the first $k$ rows of
$\hat{A}[\hat{\lambda}]$. So we can put $\lambda_i:=\hat{\lambda}_i$
for all $i=1,\ldots,k$, and these coefficients with the identity
permutation will certify that $A$ is strongly regular
\end{proof}
}\fi

\begin{definition}
\label{def:trap}
A matrix $A\in\mmset(m,n)$ is called {\rm trapezoidal} if the following condition holds:
\begin{equation}
\label{e:trap}
a_{ii}>\bigoplus_{\ell=1}^i \bigoplus_{t=\ell+1}^n a_{\ell t}\qquad\forall i=1,\ldots,m.
\end{equation}
\end{definition}

We now show that for $A\in\mmset(k,k)$ our notion of strong regularity is equivalent to the trapezoidal property, and hence it coincides
with the
strong regularity in max-min algebra introduced in~\cite{BCS-87}.

\begin{remark}
\label{r:BS} {\rm In fact, the equivalence between
Definition~\ref{def:srsquare} and Definition~\ref{def:trap} (for
square matrices) is known in max-min algebra. It follows, for
instance, from Butkovi\v{c} and Szabo~\cite[Theorem 2]{BS-06}.
However, we prefer to write the proofs of Proposition~\ref{p:srtrap}
and Theorem~\ref{t:sr} below for the sake of completeness and
convenience of the reader.}
\end{remark}

\begin{proposition}
\label{p:srtrap} $A\in\mmset(k,k+1)$ (or $A\in\mmset(k,k)$) is
strongly regular if and only if there exist permutation matrices $P$
and $Q$ such that $PAQ$ is trapezoidal.
\end{proposition}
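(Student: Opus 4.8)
The plan is to reduce the rectangular case to the square case and then prove the equivalence for square matrices. For the reduction, recall from Lemma~\ref{l:srsquare} and the discussion around~\eqref{e:hata} that $A\in\mmset(k,k+1)$ is strongly regular if and only if $\hat A\in\mmset(k+1,k+1)$ is strongly regular (Corollary~\ref{c:ahata}), and that $A\in\mmset(k,k)$ is strongly regular if and only if $[A\;\bzero]\in\mmset(k,k+1)$ is. Moreover, permuting rows and columns of $A$ corresponds to permuting rows and columns of $\hat A$ (leaving the all-ones row last) and of $[A\;\bzero]$ (leaving the zero column last), and trapezoidality is preserved under the relevant operations; so it suffices to prove the statement for square matrices $A\in\mmset(k,k)$. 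I would state this reduction first, in one short paragraph.

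For the square case, I would prove both directions directly from Definition~\ref{def:srsquare} and Definition~\ref{def:trap}. Suppose first that $PAQ$ is trapezoidal; write $B=PAQ=(b_{ij})$. Then choose $\lambda_i$ with $\bigoplus_{\ell\le i}\bigoplus_{t>\ell} b_{\ell t}<\lambda_i<b_{ii}$ for each $i$ (possible by~\eqref{e:trap}), say taken strictly decreasing in $i$ and, using Remark~\ref{r:srsquare}, distinct from each other, from the entries of $B$, and from $0,1$. I claim that in $B[\lambda]$ the maximum of row $i$ is $\lambda_i$, attained only in column $i$: indeed $\lambda_i\otimes b_{ii}=\lambda_i$, while for $s>i$ we have $\lambda_s\otimes b_{is}\le b_{is}\le\bigoplus_{\ell\le i}\bigoplus_{t>\ell}b_{\ell t}<\lambda_i$, and for $s<i$ we have $\lambda_s\otimes b_{is}\le\lambda_s<\lambda_i$. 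Hence $B$ is strongly regular with the identity permutation, and therefore $A$ is strongly regular (with permutation and coefficients obtained by conjugating by $P,Q$).

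Conversely, suppose $A$ is strongly regular, certified by coefficients $\lambda_i$ and bijection $\pi$; using Remark~\ref{r:srsquare} take the $\lambda_i$ distinct, and distinct from the entries of $A$, $0$ and $1$. Relabel columns via $Q$ so that $\pi$ becomes the identity, i.e.\ in $A'=AQ$ the maximum of row $i$ of $A'[\lambda]$ is $\lambda_i$, attained only in column $i$; in particular $\lambda_i\otimes a'_{ii}=\lambda_i$, so $a'_{ii}>\lambda_i$, and $\lambda_s\otimes a'_{is}<\lambda_i$ for all $s\ne i$. Now let $P$ be the permutation sorting the $\lambda_i$ into strictly \emph{decreasing} order, and put $B=PA'Q'$ where $Q'$ applies the same permutation to the columns (so that the diagonal is preserved and $B=(b_{ij})$ still has $b_{ii}>\mu_i$ and $\mu_s\otimes b_{is}<\mu_i$, where $\mu_1>\mu_2>\dots>\mu_k$ is the sorted sequence). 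For $i$ and any $\ell\le i$, $t>\ell$: if $t\ne\ell$ then $b_{\ell t}\ge \mu_\ell\otimes b_{\ell t}$ is not directly what we want, so instead argue $\mu_\ell\otimes b_{\ell t}<\mu_\ell\le\mu_i$ is false in the wrong direction — here the delicate point is that we need a bound on $b_{\ell t}$ itself, not on $\mu_t\otimes b_{\ell t}$. I would handle this by noting that $b_{\ell t}$ with $t>\ell$ satisfies $\mu_t\otimes b_{\ell t}<\mu_\ell$, i.e.\ $\min(\mu_t,b_{\ell t})<\mu_\ell$; since $\mu_t<\mu_\ell$ already (as $t>\ell$ and $\mu$ decreasing), this gives no information, so one must instead use that $b_{\ell t}\le 1$ and compare with $\mu_\ell$ — the correct reading is that trapezoidality requires $b_{ii}>b_{\ell t}$ for all $\ell\le i,\ t>\ell$, and this must be squeezed out of the strong-regularity data by choosing the certifying $\lambda$'s and the entries appropriately. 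This last step — showing the off-(block)diagonal entries are genuinely below the diagonal entries after sorting — is the main obstacle, and I expect it to go through by observing that strong regularity forces $b_{\ell t}<b_{tt}$ whenever $t>\ell$ (because column $t$ does not attain the row-$\ell$ maximum, which after suitable choice of $\lambda$'s pins $b_{\ell t}$ below $\mu_\ell\le\mu_t<b_{tt}$ is again circular, so the honest argument compares $b_{\ell t}$ with $\lambda_\ell$ using $\lambda_\ell\otimes b_{\ell\ell}=\lambda_\ell$ and the strict inequality $\lambda_\ell > \lambda_\ell\otimes b_{\ell t}$ only when $b_{\ell t}\ge\lambda_\ell$ fails), and then using $b_{tt}>\lambda_t$ together with monotonicity to chain the inequalities into~\eqref{e:trap}. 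Concretely, I would show $b_{\ell t}<\lambda_\ell$ is in fact automatic because $\lambda_\ell$ is not an entry of $A$ and the row-$\ell$ max is attained uniquely in column $\ell$, hence $b_{\ell t}\ne\lambda_\ell$ and $\min(\lambda_t,b_{\ell t})<\lambda_\ell$ forces $b_{\ell t}<\lambda_\ell$; combined with $\lambda_\ell\le\mu_i<b_{ii}$ for $\ell\le i$ this yields $b_{ii}>b_{\ell t}$, i.e.\ $B$ is trapezoidal. Finally, unwinding, $PAQ''$ is trapezoidal for $Q''=QQ'$, completing the proof.
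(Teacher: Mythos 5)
Your overall strategy (reduce to one matrix shape, then prove both implications by explicit inequalities on $A[\lambda]$) is the same as the paper's, but there is a genuine error that derails both directions: you sort the certifying coefficients in \emph{decreasing} order of the row index, whereas the argument only works with the \emph{increasing} order. In the ``if'' direction, your claim that for $s<i$ one has $\lambda_s\otimes b_{is}\le\lambda_s<\lambda_i$ contradicts your own choice of strictly decreasing $\lambda$'s; and since trapezoidality says nothing about the entries $b_{is}$ with $s<i$, the term $\lambda_s\otimes b_{is}=\min(\lambda_s,b_{is})$ can genuinely equal $\lambda_s>\lambda_i$ and beat the diagonal term, so strong regularity does not follow. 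In the ``only if'' direction you correctly notice that with $\mu_t<\mu_\ell$ (for $t>\ell$) the inequality $\min(\mu_t,b_{\ell t})<\mu_\ell$ carries no information about $b_{\ell t}$, but your proposed repair is still invalid: $\min(\lambda_t,b_{\ell t})<\lambda_\ell$ forces $b_{\ell t}<\lambda_\ell$ only when $\lambda_t\ge\lambda_\ell$, and your final chain invokes $\lambda_\ell\le\mu_i$ for $\ell\le i$, which is false for a decreasing sequence. So as written neither implication is established.

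The fix is simply to reverse the sort, which is what the paper does. If the certificate satisfies $\lambda_1<\dots<\lambda_k$ (after permuting rows and columns so that $\pi=\mathrm{id}$ and the diagonal is preserved), then for $t>\ell$ we have $\lambda_t>\lambda_\ell$, so $\min(\lambda_t,a_{\ell t})<\lambda_\ell$ really does force $a_{\ell t}<\lambda_\ell$, and then $a_{\ell t}<\lambda_\ell\le\lambda_i<a_{ii}$ for all $\ell\le i$ and $t>\ell$, which is exactly~\eqref{e:trap}. Conversely, for a trapezoidal matrix take $\lambda_i=\alpha_i+\epsilon_i$ with $\alpha_i$ the right-hand side of~\eqref{e:trap} and $0<\epsilon_1<\dots<\epsilon_k$ small enough that $\lambda_i<a_{ii}$; since the $\alpha_i$ are non-decreasing in $i$, the $\lambda_i$ are increasing, and both of your row-maximum estimates ($s>i$ via $a_{is}\le\alpha_i<\lambda_i$, and $s<i$ via $\lambda_s<\lambda_i$) become valid. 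With that correction your argument coincides with the paper's. (Your reduction of the rectangular case to the square case via $\hat A$ is workable and not circular, though the paper goes the other way, reducing $\mmset(k,k)$ to $\mmset(k,k+1)$ by adjoining a zero column; either is fine once the permutation bookkeeping is spelled out.)
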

\begin{proof}
We can assume that $A\in\mmset(k,k+1)$, since the other case is reduced to that case by
adjoining to $A\in\mmset(k,k)$ a zero column.

For the ``if'' part, we can assume that $A$ is trapezoidal. For
every row index $i$, we let $\lambda_i:=\alpha_i+\epsilon_i$, where
$\alpha_i$ equal the right-hand side of~\eqref{e:trap} and
$\epsilon_i$ are such that $\epsilon_1<\ldots<\epsilon_k$ and
$\alpha_i+\epsilon_i<a_{ii}$. Observe that
$\alpha_1\leq\ldots\leq\alpha_k$, and hence
$\lambda_1<\ldots<\lambda_k$. For $t>i$ we obtain $\lambda_i\otimes
a_{ii}>\lambda_t\otimes a_{it}$ since $a_{ii}>\lambda_i>a_{it}$ by
construction. For $t<i$ we obtain $\lambda_i\otimes
a_{ii}>\lambda_t\otimes a_{it}$ since $\lambda_i\otimes
a_{ii}=\lambda_i>\lambda_t$. Thus the coefficients
$\lambda_1,\ldots,\lambda_k$ and the identity permutation certify
that $A$ is strongly regular

The ``only if'' part: Let $A$ be strongly regular. Applying row and column permutations if necessary
(which corresponds to taking $PAQ$ as in the claim) we can assume that the
strong regularity is certified by the identity permutation
$\pi:\{1,\ldots,k\}\to\{1,\ldots,k\}$ and $\lambda_1,\ldots,\lambda_k$, which are
distinct from the entries of $A$ and satisfy
$0<\lambda_1<\lambda_2<\ldots<\lambda_k<1$.
In particular, we have $\lambda_i<a_{ii}$ for all $i$. For each $i$, we then have
$\lambda_i>a_{it}$ for all $t>i$, since $\lambda_i>\lambda_t\otimes a_{it}$ and $\lambda_i<\lambda_t$. Hence we also have
$a_{ii}>\lambda_i>\lambda_{\ell}>a_{\ell t}$ for all $\ell<i$ and $\ell<t$. Thus the trapezoidal property follows.
\end{proof}

Recall that by Corollary~\ref{c:ahata}, $A\in\mmset(k,k+1)$ is
strongly regular if and only if $\hat{A}$ is strongly regular In
fact, this is also easy to see by means of the trapezoidal property.
We now conclude with the following observation, which is similar
to~\cite[Theorem 3]{BCS-87}.

\begin{theorem}
\label{t:sr}
Let $A\in\mmset(d,k+1)$ with $d\geq k$. Then the following are equivalent:
\begin{itemize}
\item[{\rm (i)}] there exists a vector $b\in\mmset^d$ such that the system $A\otimes x=b$ has a positive solution, which is also the unique solution that satisfies $\bigoplus_{i=1}^{k+1} x_i=1$;
\item[{\rm (ii)}] there exists a vector $\hat{b}\in\mmset^{d+1}$ such that the system $\hat{A}\otimes x=\hat{b}$ has a
    positive solution, which is also the unique solution to that system;
\item[{\rm (iii)}] $\hat{A}$ contains a $(k+1)\times(k+1)$ strongly regular submatrix;
\item[{\rm (iv)}] $A$ contains a $k\times (k+1)$ strongly regular submatrix.
\end{itemize}
\end{theorem}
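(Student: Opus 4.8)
The plan is to prove the cycle (i) $\Rightarrow$ (ii) $\Rightarrow$ (iii) $\Rightarrow$ (iv) $\Rightarrow$ (i). The implication (i) $\Rightarrow$ (ii) is essentially bookkeeping: since the last row of $\hat A$ equals $\bunity$, the system $\hat A\otimes x=(b,1)$ is equivalent to the conjunction of $A\otimes x=b$ and $\bigoplus_{i=1}^{k+1}x_i=1$, so the positive vector furnished by (i) is exactly a positive solution that is the unique solution of $\hat A\otimes x=\hat b$ with $\hat b:=(b,1)$. (This already shows (i) and (ii) are morally the same statement, but the cyclic arrangement dispenses with the converse.)

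The core step is (ii) $\Rightarrow$ (iii), which I would prove by a perturbation argument. Let $x^*\in\mmset^{k+1}$, with all $x^*_j>0$, be the unique solution of $\hat A\otimes x=\hat b$. Fix a column $j$. If in \emph{every} row $i$ the maximum of $\bigoplus_t\min(\hat a_{it},x^*_t)$ is either not attained uniquely at $t=j$, or is attained there but with $\hat a_{ij}<x^*_j$, then replacing $x^*_j$ by $x^*_j-\delta$ for all sufficiently small $\delta>0$ (possible since $x^*_j>0$) leaves $\hat A\otimes x$ unchanged, producing a second solution and contradicting uniqueness. Hence there is a row $i_j$ in which $t=j$ is the \emph{unique} maximizer and $\hat a_{i_j j}\ge x^*_j$, so this maximum equals $\hat b_{i_j}=x^*_j$. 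Two distinct columns cannot simultaneously be the unique maximizer in one row, so $i_1,\dots,i_{k+1}$ are pairwise distinct, and the $(k+1)\times(k+1)$ submatrix of $\hat A$ on these rows is strongly regular in the sense of Definition~\ref{def:srsquare}, with certifying coefficients $\lambda:=x^*$ and bijection $i_j\mapsto j$. This is (iii).

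For (iii) $\Rightarrow$ (iv), let $B$ be a strongly regular $(k+1)\times(k+1)$ submatrix of $\hat A$. If it uses the all-ones row, then $B=\widehat{A_0}$ for the $k\times(k+1)$ submatrix $A_0$ of $A$ on the remaining $k$ rows, and Corollary~\ref{c:ahata} shows $A_0$ is strongly regular. Otherwise $B$ is a genuine $(k+1)\times(k+1)$ submatrix of $A$ (possible only when $d\ge k+1$); by Proposition~\ref{p:srtrap} we may assume, after suitable row and column permutations, that $B$ is trapezoidal, and then deleting its last row yields a strongly regular $k\times(k+1)$ submatrix, with the last column taken as the unscaled one and with the increasing certifying coefficients supplied by the ``if'' part of Proposition~\ref{p:srtrap}. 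For (iv) $\Rightarrow$ (i), take a $k\times(k+1)$ strongly regular submatrix $A_0$ of $A$; after permutations its certificate is the identity bijection, the coefficients $\lambda_1<\dots<\lambda_k$ are generic with $a_{ii}>\lambda_i$, and the last column is unscaled. Set $x^*:=(\lambda_1,\dots,\lambda_k,1)$ and $b:=A\otimes x^*$; then $x^*$ is positive, $\bigoplus_i x^*_i=1$, and $A\otimes x^*=b$. If $y$ also solves $A\otimes x=b$ with $\bigoplus_i y_i=1$, then on each of the $k$ rows of $A_0$ the equality $\bigoplus_t\min(a_{it},y_t)=\lambda_i$ together with $a_{ii}>\lambda_i$ forces $y_i\le\lambda_i$; feeding this back, the unique-maximizer property in the strong regularity of $A_0$ forces $y_i=\lambda_i$ for all $i\le k$, and since $\lambda_i<1$ this gives $y_{k+1}=1$, i.e. $y=x^*$.

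The step I expect to be the main obstacle is (ii) $\Rightarrow$ (iii): one has to convert the purely algebraic hypothesis of unique solvability into the combinatorial data of a strongly regular submatrix, and the delicate points are (a) showing that the downward perturbation of $x^*_j$ genuinely produces a distinct solution unless a pivot row of the stated form exists, and (b) that the pivot rows $i_j$ can be chosen pairwise distinct. The remaining implications are routine once the reformulations are in place (the last row of $\hat A$ being $\bunity$; the trapezoidal normal form; the genericity of the $\lambda_i$). One should also dispose of the degenerate small cases separately --- e.g. $k=0$, or $d=k$ where every $(k+1)\times(k+1)$ submatrix of $\hat A$ necessarily uses the all-ones row --- but these are immediate.
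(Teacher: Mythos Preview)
Your proof is correct and follows the same cyclic structure (i)$\Rightarrow$(ii)$\Rightarrow$(iii)$\Rightarrow$(iv)$\Rightarrow$(i) as the paper, but several of the individual links are argued differently.

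For (ii)$\Rightarrow$(iii) the paper simply cites \cite[Theorem~3]{BCS-87}, whereas you supply a self-contained perturbation argument: for each column $j$, lowering $x^*_j$ slightly produces a second solution unless some row has $j$ as its unique maximizer with $\hat a_{i_jj}\ge x^*_j$. This is essentially a reproof of the cited result, and it is correct; the key observations (that a non-unique or ``saturated'' maximizer is insensitive to a small decrease, and that unique maximizers in the same row cannot coincide) are exactly what is needed. For (iii)$\Leftrightarrow$(iv) the paper's primary route is geometric, via Theorems~\ref{t:mainres} and~\ref{t:homothety}; it mentions the trapezoidal route only as an alternative. You take the combinatorial route, splitting on whether the all-ones row is used (invoking Corollary~\ref{c:ahata}, which itself rests on the geometric theorems) or not (truncating a trapezoidal form). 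Both are valid; note that your Case~1 could also be handled purely via the trapezoidal form, since in any trapezoidal arrangement of $B$ the all-ones row is forced to be last. For (iv)$\Rightarrow$(i), both the paper and you build the same $x^*=(\lambda_1,\ldots,\lambda_k,1)$ and $b=A\otimes x^*$, but the uniqueness arguments diverge: the paper argues that any other solution $y$ must be a permutation of $x^*$ (since the $\lambda_i$ avoid the entries of $A$) and then derives a contradiction from $x^*\oplus y$ losing the smallest displaced $\lambda_i$; your two-pass inequality argument ($y_i\le\lambda_i$ from $a_{ii}>\lambda_i$, then $y_i=\lambda_i$ from the strict domination of the other columns) is more direct and arguably cleaner.

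In short: same skeleton, but you make the proof more self-contained at (ii)$\Rightarrow$(iii) and more elementary at (iv)$\Rightarrow$(i), at the cost of still leaning on Corollary~\ref{c:ahata} (hence indirectly on the geometric machinery) in one branch of (iii)$\Rightarrow$(iv).
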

\begin{proof}
(i)$\Rightarrow$(ii): Take $\hat{b}=(b\; 1)^T$. Observe that
$\bigoplus_{i=1}^{k+1} x_i=1$ is satisfied for any solution of
$\hat{A}\otimes x=\hat{b}$, and then (i) shows that it is unique.\\
(ii)$\Rightarrow$(iii):
For this we can exploit, e.g.,\cite[Theorerm 3]{BCS-87}.\\
(iii)$\Leftrightarrow$(iv): Equivalence between these statements follows from Theorem~\ref{t:mainres} and Theorem~\ref{t:homothety}.
Alternatively, we can use the existence of permutation matrices $P$ and $Q$ such that $A$ or $\hat{A}$ have a trapezoidal submatrix (Proposition~\ref{p:srtrap}).\\

\if{
Applying the transformation
$\hat{A}\to P\hat{A}Q$ where $P$ and $Q$ are permutation matrices,
we can obtain a matrix such that the $(k+1)\times (k+1)$ submatrix extracted from its first $k+1$ rows is trapezoidal.
If this submatrix contains the row of all ones, then
this is the last row.  In any case, the first $k$ rows constitute a
trapezoidal $k\times (k+1)$ submatrix, hence (iv).\\
}\fi \if{ If the strongly regular submatrix of $\hat{A}$ contains
the last row, then the implication follows from Prop.~\ref{p:ahata}.
If it does not contain the last row, then we can assume that the
submatrix extracted from the first $k+1$ rows is strongly regular
and the coefficient $\lambda_i$ attains the unique maximum in the
$i$th row, for $i=1,\ldots,k$. Assuming that all $\lambda$ are
different, we identify the greatest of them. The row in which this
coefficient attains the unique maximum is then replaced by the last
row of $\hat{A}$ giving rise to a strongly regular submatrix
containing the last row. }\fi

(iv)$\Rightarrow$(i): Let
$\lambda_1,\ldots,\lambda_k$ and the identity permutation certify
the strong regularity of the $k\times(k+1)$ submatrix extracted from
the first $k$ rows of $A$. Assume that the values of
$\lambda_1,\ldots,\lambda_k$ are all different and distinct from the
entries of $A,$ as well as $0$ and $1$. Define the components of $b$
to be the maxima in the rows of $A[\lambda]$, then the first $k$
components of $b$ are equal to $\lambda_1,\ldots,\lambda_k$.

Let $A'$ be the strongly regular $k\times (k+1)$ submatrix extracted
from the first $k$ rows of $A$. The corresponding subvector of $b$
is $b'=(\lambda_1,\ldots,\lambda_k)$, and
$x=(\lambda_1,\ldots,\lambda_k,1)$ is a solution to $A'\otimes
x=b'$ and $A\otimes x=b$. As the entries of $b'$ are all different from the entries of
$A'$, any other solution $y$ with a component equal to $1$ contains
all these components $\lambda_1,\ldots,\lambda_k,1$, possibly
permuted. However, then $x\oplus y$ is also a solution where some of
the components $\lambda_i$ are lost, since we chose them to be all
different. This is a contradiction, which shows that $A'\otimes
x=b'$ is uniquely solvable with $(\lambda_1,\ldots,\lambda_k,1)$
(requiring one $1$ component), which implies the same for $A\otimes
x=b$.
\end{proof}

In particular, $A\in\mmset(d,k+1)$ contains a
strongly regular $k\times (k+1)$ submatrix if and only if there
exist permutation matrices $P$ and $Q$ such that $P\hat{A}Q$
contains a trapezoidal $(k+1)\times(k+1)$ submatrix. To find such a
submatrix, that is, to verify that the equivalent conditions of
Theorem~\ref{t:sr} hold, we can apply the strongly polynomial
algorithm of~\cite{BCS-87}.

We conclude with two sufficient conditions for a matrix to have low rank.

\begin{proposition}
\label{p:critera2} If $A\in\mmset(d,m+1)$ is such that
for any $\lambda_1,\ldots,\lambda_{m+1}\in\mmset$ with $\lambda_j=1$
for some $j\in\{1,\ldots,m+1\}$ there exist $k$ columns such that
the maximum in every row of $A[\lambda]$~\eqref{lambda-matrix}  is
attained in one of these $k$ columns, then $\dim(\conv(A))\leq k$.
\end{proposition}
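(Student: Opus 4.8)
The plan is to exhibit $\conv(A)$ as a union of finitely many max-min polytopes, each spanned by at most $k+1$ columns of $A$ and hence of dimension at most $k$ by Theorem~\ref{t:mainres}, and then to argue that passing to such a finite union cannot raise the dimension.

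First I would set up the covering. Any $y\in\conv(A)$ is, by~\eqref{convX} (after collapsing repeated columns), a max-min convex combination $y=\bigoplus_{i=1}^{m+1}\lambda_i\otimes A_{\bullet i}$ with $\bigoplus_{i=1}^{m+1}\lambda_i=1$, so $\lambda_j=1$ for some $j$. Applying the hypothesis to these coefficients yields a set $S$ of at most $k$ column indices such that the maximum in every row of $A[\lambda]$ is attained within $S$; since the vector of row maxima of $A[\lambda]$ is exactly $y$, this means $y=\bigoplus_{i\in S}\lambda_i\otimes A_{\bullet i}$. Put $T:=S$ when $j\in S$ and $T:=S\cup\{j\}$ otherwise, so that $|T|\le k+1$ and $y$ is a max-min convex combination of the columns indexed by $T$: if $j\in S$ this is immediate since then $\bigoplus_{i\in S}\lambda_i=1$; if $j\notin S$, then $y\ge\lambda_j\otimes A_{\bullet j}=A_{\bullet j}$, so $y=y\oplus A_{\bullet j}=\bigl(\bigoplus_{i\in S}\lambda_i\otimes A_{\bullet i}\bigr)\oplus(1\otimes A_{\bullet j})$, again a max-min convex combination of the columns indexed by $T$. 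Writing $A_T$ for the submatrix of $A$ with columns in $T$, this gives $y\in\conv(A_T)$, and since $\conv(A_T)\subseteq\conv(A)$ always, we obtain
\[
\conv(A)=\bigcup_{|T|\le k+1}\conv(A_T),
\]
a union over the finitely many index sets $T\subseteq\{1,\dots,m+1\}$ with $|T|\le k+1$.

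Next I would bound the pieces. Viewing $A_T$ as a matrix with $|T|\le k+1$ columns, any strongly regular $\ell\times(\ell+1)$ submatrix of it forces $\ell+1\le|T|\le k+1$, hence $\ell\le k$; thus $\rank(A_T)\le k$, and Theorem~\ref{t:mainres} gives $\dim(\conv(A_T))\le k$. I would also record that each $\conv(A_T)$ is compact, being the image of the compact set $\{\mu\in\mmset^{|T|}\colon\bigoplus_i\mu_i=1\}$ under the continuous map $\mu\mapsto\bigoplus_{i\in T}\mu_i\otimes A_{\bullet i}$.

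Finally, the crux: such a finite union has dimension at most $k$. Suppose not, so $\dim(\conv(A))\ge k+1$; by Corollary~\ref{c:dim-simple}, $\conv(A)$ contains a $(k+1)$-dimensional open quasibox $B$. The parametrization~\eqref{e:quasibox} of $B$ by its $k+1$ free coordinate-blocks is a homeomorphism onto an open box in $\R^{k+1}$, so $B$ is a nonempty Baire space, and by the covering $B=\bigcup_{|T|\le k+1}\bigl(B\cap\conv(A_T)\bigr)$ is a finite union of sets closed in $B$ (each $\conv(A_T)$ being compact). Hence some $B\cap\conv(A_T)$ has nonempty interior in $B$; under the homeomorphism that interior contains a small coordinate subcube, i.e.\ a $(k+1)$-dimensional open quasibox $B'\subseteq B$ of the same combinatorial type with $B'\subseteq\conv(A_T)$, so $\dim(\conv(A_T))\ge k+1$ by Corollary~\ref{c:dim-simple}, contradicting the previous step; therefore $\dim(\conv(A))\le k$. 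I expect the Baire-category step — together with the observation that a full-dimensional relatively open subset of an open quasibox again contains an open quasibox of the same dimension and type — to be the only genuinely delicate point; the rest is bookkeeping with max-min convex combinations.
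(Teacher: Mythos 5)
Your proof is correct, but it takes a genuinely different and considerably longer route than the paper. The paper's proof is a one-liner: the hypothesis directly implies that $A$ has no strongly regular $s\times(s+1)$ submatrix with $s>k$ --- indeed, given such a submatrix with certifying coefficients, extend them by $0$ on the remaining columns of $A$ (and $1$ on the distinguished column $j$); the $s$ rows of the submatrix then have their unique maxima in $s$ \emph{distinct} columns of $A[\lambda]$, all of which must lie among the $k$ columns provided by the hypothesis, forcing $s\le k$. Hence $\rank(A)\le k$ and Theorem~\ref{t:mainres} finishes. Your argument instead uses the hypothesis to build a Carath\'eodory-type covering $\conv(A)=\bigcup_{|T|\le k+1}\conv(A_T)$, bounds each piece by $\rank(A_T)\le k$ via Theorem~\ref{t:mainres}, and then shows that a finite union of compact sets of dimension $\le k$ cannot contain a $(k+1)$-dimensional open quasibox, via a Baire-category argument in the quasibox's parametrizing coordinates. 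All the steps check out (the observation that $y=\bigoplus_{i\in S}\lambda_i\otimes A_{\bullet i}$, the reinsertion of column $j$ to restore a convex combination, compactness of each $\conv(A_T)$, and the fact that a nonempty relatively open subset of an open quasibox contains an open sub-quasibox of the same type). What your route buys is two reusable facts --- the covering of a max-min polytope by hulls of few columns under the stated hypothesis, and the stability of dimension under finite unions of compact max-min convex sets --- but it uses both directions of Theorem~\ref{t:mainres} plus topological machinery where the paper needs only the inequality $\dim(\conv(A))\le\rank(A)$ applied once.
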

\begin{proof}
Observe that there is no regular $s\times (s+1)$ submatrix with $s>k$, and apply Theorem~\ref{t:mainres}.
\end{proof}

\begin{corollary}[Sufficient condition for $\dim(\conv(A))\leq 2$]\label{c:emptint}
Let $A=(a_{ij})\in \mmset(d,m)$  satisfy
\begin{equation}\label{condition11}
\max_{1\le k\le d} a_{ki}\le \min_{1\le k\le n} a_{k,i+1},\quad\forall i\colon 1\le i\le m
\end{equation}
Then $\dim(\conv(A))\leq 2$.
\end{corollary}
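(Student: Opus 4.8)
The plan is to deduce the bound from Proposition~\ref{p:critera2} applied with $k=2$ (reading the ``$m+1$'' there as our $m$). Concretely, I would show that for \emph{every} choice of coefficients $\lambda_1,\dots,\lambda_m\in\mmset$ there are two column indices $q,r$ such that in every row of $A[\lambda]$ the maximum is attained in column $q$ or in column $r$; this is even a little more than the proposition requires (there one of the $\lambda_s$ is prescribed to equal $1$), so it will suffice. Write $c_s:=\max_{1\le k\le d}a_{ks}$ and $b_s:=\min_{1\le k\le d}a_{ks}$, so that hypothesis~\eqref{condition11} reads $c_s\le b_{s+1}$ for $1\le s\le m-1$. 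Iterating this yields the ``gapped monotonicity'' $a_{\ell s}\le c_s\le b_{s'}\le a_{\ell' s'}$ for all rows $\ell,\ell'$ whenever $s<s'$; in particular the entries along each single row increase, $a_{\ell 1}\le a_{\ell 2}\le\dots\le a_{\ell m}$.

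The key step is a normalization. Set $L_s:=\min(\lambda_s,c_s)$. First I would check that the $(\ell,s)$ entry of $A[\lambda]$, namely $\min(\lambda_s,a_{\ell s})$, equals $\min(L_s,a_{\ell s})$: this is clear when $\lambda_s<c_s$, and when $\lambda_s\ge c_s$ both sides equal $a_{\ell s}$ because $a_{\ell s}\le c_s$. Since $L_s\le c_s$, gapped monotonicity sharpens to $L_s\le a_{\ell s'}$ for all rows $\ell$ whenever $s<s'$. Now let $q$ be the largest index attaining $\max_{1\le s\le m}L_s$. For $s\le q$ and any $\ell$ we have $L_s\le L_q$ and $a_{\ell s}\le a_{\ell q}$, hence $\min(L_s,a_{\ell s})\le\min(L_q,a_{\ell q})$: column $q$ dominates columns $1,\dots,q$ in every row. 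For $s>q$ we have $L_s\le L_q\le a_{\ell s}$, so the $(\ell,s)$ entry equals $\min(L_s,a_{\ell s})=L_s$, which does not depend on the row; taking $r$ to be the largest index in $\{q+1,\dots,m\}$ attaining $\max_{q<s\le m}L_s$ (and $r:=q$ if $q=m$), column $r$ dominates columns $q+1,\dots,m$ in every row. Hence each row maximum of $A[\lambda]$ sits in column $q$ or column $r$, and Proposition~\ref{p:critera2} gives $\dim(\conv(A))\le 2$.

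I do not anticipate a genuine obstacle here; the one thing to handle with care is the use of~\eqref{condition11} in its iterated, ``gapped'' form $L_s\le a_{\ell s'}$ for $s<s'$. This is precisely what forces every column to the right of the $L$-maximizer to contribute one and the same value $L_s$ in all rows, and so collapses the covering problem to that single best right-hand column together with column $q$ --- two columns in all.
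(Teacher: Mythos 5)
Your argument is correct and follows essentially the same route as the paper: both reduce to Proposition~\ref{p:critera2} with $k=2$ by exhibiting, for each admissible $\lambda$, two columns of $A[\lambda]$ that dominate all others in every row, using the gapped monotonicity $a_{\ell s}\le c_s\le b_{s'}\le a_{\ell' s'}$ for $s<s'$ that \eqref{condition11} provides. The only difference is bookkeeping: the paper splits the columns into the set $I$ of indices with $\lambda_i>\min_k a_{ki}$ (dominated row-wise by the rightmost column of $I$) and its complement $J$ (whose $A[\lambda]$-columns are constant, hence dominated by the one with largest $\lambda_i$), whereas you normalize to $L_s=\min(\lambda_s,c_s)$ and split at the rightmost maximizer of $L$; both choices produce the two covering columns required by the proposition.
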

\begin{proof}
Let $x$ be a max-min convex combination of the columns of $A$,
  with coefficients $\lambda_1,\ldots,\lambda_m$ such that $\lambda_j=1$ for some $j\in\{1,\ldots,m\}$.  Consider the matrix $A[\lambda]$~\eqref{lambda-matrix}. We will show that there are two columns
where all row maxima of~\eqref{lambda-matrix}
are attained. For this, let $I$ be the set
of column indices $i$ where $\lambda_i>\min_k a_{ki}$, and let $J$ be the complement
of this set.

Considering the submatrix of $A[\lambda]$~\eqref{lambda-matrix}
extracted from the columns in $I$ we see that all row maxima are attained in the
column with the biggest index.  All coefficients of a column in $J$ are equal to
each other. Therefore, in the submatrix of~$A[\lambda]$
extracted from the columns in $J$ there is also a column where all row maxima
are attained. This column and the column with biggest index in $I$ are the
two columns where all row maxima of~$A[\lambda]$
are attained (possibly, there may be other such columns, but they are redundant).
By Proposition~\ref{p:critera2} this shows that $\dim(\conv(A))\leq 2$.
\end{proof}

\if{ In particular, it is not true that if all entries of a matrix
$A\in\cB^{n\times (n+1)}$ are different, then the max-min convex
hull of the columns of $A$ has a non-empty interior.}\fi

\begin{example} The max-min polytope
$\conv(A) \subseteq \mmset^3$ generated by the matrix
\begin{equation}
A=
\begin{pmatrix}
.01 & .02 & .03& .04\\
.05 & .06 & .07 & .08\\
.09 & .10 & .11 & .12
\end{pmatrix}
\end{equation}
has non-empty interior, meaning that $\dim(\conv(A))=3$. To see that $A$ is strongly regular, choose
$j=1$ and $\lambda_2=.10, \lambda_3=.07, \lambda_4=.04$. A trapezoidal form of $A$ can be obtained
by reversing the order of columns:
\begin{equation}
A=
\begin{pmatrix}
.04 & .03 & .02& .01\\
.08 & .07 & .06 & .05\\
.12 & .11 & .10 & .09
\end{pmatrix}
\end{equation}

\end{example}

\begin{example} The max-min polytope $\conv(A) \subseteq \mmset^3$ generated by the matrix
\begin{equation}
A=
\begin{pmatrix}
.01 & .04 & .07& 10\\
.02 & .05 & .08 & .11\\
.03 & .06 & .09 & .12
\end{pmatrix}
\end{equation}
has $\dim(\conv(A))=2$. The inequality $\dim(\conv(A))\leq 2$
follows from Corollary~\ref{c:emptint}, as condition
\eqref{condition11} is satisfied for all $i\colon 1\le i\le 3$. A
regular $2\times 3$ submatrix can be extracted from rows $1$ and $3$ and
columns $1,3,4$: set $j=1$, $\lambda_3=.09$ and $\lambda_4=.08$.
\end{example}

\section*{Acknowledgement}
The authors are grateful to the anonymous referees, whose comments have contributed to the
clarity of the proofs in this paper.

\section*{References}


\begin{thebibliography}{12}

\bibitem{AGG} X.~Allamigeon, S.~Gaubert and E.~Goubault. Computing the vertices of tropical polyhedra using directed hypergraphs. Discrete and Computational Geometry, \textbf{49} (2013) 247--279.

\bibitem{BCS-87} P.~Butkovi\v{c}, K.~Cechl\'arov\'a and P.~Szabo. Strong linear independence in bottleneck algebra.
Linear Algebra Appl., \textbf{94} (1987) 133-155.

\bibitem{BS-06} P.~Butkovi\v{c} and P.~Szabo. An algorithm for
checking strong regularity of matrices in bottleneck algebras, Proc.
of the Internat. Conference on Math. Methods in Economics and
Industry, 2007. Available
from~\url{http://web.mat.bham.ac.uk/P.Butkovic/Pubs.html}

\bibitem{Cec-95} K.~Cechl\'{a}rov\'{a}. Unique solvability of max-min fuzzy equations and strong regularity of matrices over fuzzy algebra. Fuzzy Sets and Systems, \textbf{75} (1995), 165-177.

\bibitem{CGQS-05}
G.~Cohen, S.~Gaubert, J.P. Quadrat, and I.~Singer. Max-plus convex sets and functions. In G.~Litvinov and V.~Maslov, editors, {\em Idempotent Mathematics
and Mathematical Physics}, volume 377 of {\em Contemporary Mathematics}, pages 105--129. AMS, Providence, 2005.
\newblock E-print arXiv:math/0308166.

\bibitem{DS-04}
M.~Develin and B.~Sturmfels. Tropical convexity. Documenta Math.,
\textbf{9} (2004), 1-20.

\bibitem{develin-etc} M.~Develin, F.~Santos and B.~Sturmfels. On the rank of a tropical matrix. In "Discrete and Computational
Geometry" (E. Goodman, J. Pach and E. Welzl, eds), MSRI Publications, Cambridge Univ. Press, 2005, 213--242.

\bibitem{JK-09} M.~Joswig and K.~Kulas. Tropical and ordinary
convexity combined. Advances in Geometry~\textbf{10}:2 (2010),
333-352.

\bibitem{Gav-01}
M.~Gavalec. Solvability and unique solvability of max-min fuzzy
equations. Fuzzy Sets and Systems \textbf{124} (2001) 385-393.


\bibitem{Gav:04}
M.~Gavalec. Periodicity in Extremal Algebra. Gaudeamus, Hradec Kr\'alov\'e, 2004.

\bibitem{GP-07}
M.~Gavalec and J.~Pl\'{a}vka. Simple image set of linear mappings a max-min algebra. Discrete Appl. Math. \textbf{155} (2007), 611-622.

\bibitem{GP-03} M.~Gavalec and J.~Pl\'{a}vka. Strong regularity of matrices in
general max-min algebra. Linear Algebra Appl. \textbf{371} (2003), 241--254.

\bibitem{GP-04} M. Gavalec. General trapezoidal algorithm
for strongly regular max-min matrices, Linear Algebra Appl.
\textbf{369} (2003), 319--338.


\bibitem{DLP}
E.~Lorenzo and M.~J.~De La Puente. An algorithm to describe the solution set of any tropical linear system $A\odot x=B\odot x$,
Linear Algebra Appl., \textbf{435},  (2011), 884--901.


\bibitem{Nit-10}
V.~Nitica. The structure of max-min hyperplanes. Linear Algebra Appl. \textbf{432} (2010), 402–-429.

\bibitem{NS-10}
V.~Nitica and S.~Sergeev. On semispaces and hyperplanes in max-min convex geometry. Kybernetika \textbf{46} (2010), 548--557.

\bibitem{NS-11}
V.~Nitica and S.~Sergeev. An interval version of separation by semispaces in max-min convexity. Linear Algebra Appl. \textbf{435} (2011), 1637–-1648.

\bibitem{NS-13} V.~Nitica and S.~Sergeev. Tropical convexity over max-min semiring.
In {\em Tropical and Idempotent
Mathematics and Applications}, volume 616 of {\em Contemporary Mathematics},
to appear. AMS, Providence, 2014. E-print \url{http://arxiv.org/abs/1303.7451}

\bibitem{NS-1} V.~Nitica and I.~Singer. Max-plus convex sets and max-plus semispaces I. Optimization \textbf{56} (2007) 171--205.

\bibitem{NS-08I}
V.~Nitica and I.~Singer. Contributions to max-min convex geometry. I. Segments. Linear Algebra Appl. \textbf{428} (2008), 1439--1459.

\bibitem{NS-08II}
V.~Nitica and I.~Singer. Contributions to max-min convex geometry. II. Semispaces and convex sets.
Linear Algebra Appl. \textbf{428} (2008), 2085--2115.



\bibitem{Rock} R.~T.~Rockafellar. Convex analysis. Princeton University Press 1996.


\bibitem{Ser-03}
S.~N.~Sergeev. Algorithmic complexity of a problem of idempotent convex geometry. Math. Notes (Moscow), \textbf{74} (2003), 848--852.

\bibitem{HZim} H.~J.~Zimmermann (2001). Fuzzy Set Theory and its Applications (4th ed.). Kluwer.

\bibitem{Zim-77}
K.~Zimmermann. A general separation theorem in extremal algebras. Ekonom.-Mat. Obzor (Prague), \textbf{13} (1977), 179--201.



\bibitem{Zim-81} K.~Zimmermann. Convexity in semimodules. Ekonom.-Mat. Obzor (Prague), \textbf{17} (1981), 199--213.



\end{thebibliography}
\end{document}